\author{H. Egger \and B. Radu}
\address{Department of Mathematics, TU Darmstadt, Germany}
\email{egger@mathematik.tu-darmstadt.de}
\email{radu@gsc.tu-darmstadt.de}
\definecolor{mygray}{rgb}{.5,.5,.5}
\title[Second order multipoint flux method]{A second order multipoint flux mixed\\ finite element method on hybrid meshes}
\newtheorem{lemma}{Lemma}[section]
\newtheorem{problem}[lemma]{Problem}
\newtheorem{theorem}[lemma]{Theorem}
\theoremstyle{definition}
\newtheorem{remark}[lemma]{Remark}
\newtheorem*{example*}{Example}
\def\div{\mathrm{div}}
\def\curl{\mathrm{curl}}
\def\K{\mathcal{K}}
\def\Kmo{\K^{-1}}
\def\ttB{\mathsf{B}}
\def\ttMh{\mathsf{M}_{\mathsf{h}}}
\def\ttu{\mathsf{u}}
\def\ttp{\mathsf{p}}
\def\ttf{\mathsf{f}}
\def\ttg{\mathsf{g}}
\def\RR{\mathbb{R}}
\def\T{\mathcal{T}}
\def\Q{\mathcal{Q}}
\def\u{u}
\def\V{V}
\def\v{v}
\def\w{w}
\def\n{n}
\def\P{\text{P}}
\def\E{T}
\def\BDM{\text{BDM}}
\def\BDFM{\text{BDFM}}
\def\BDDF{\text{BDDF}}
\def\RT{\text{RT}}
\def\RTN{\text{RTN}}
\newcommand{\doublewidetilde}[1]{{%
  \mathpalette\double@widetilde{#1}%
}}
\newcommand{\double@widetilde}[2]{%
  \sbox\z@{$\m@th#1\widetilde{#2}$}%
  \ht\z@=.9\ht\z@
  \widetilde{\box\z@}%
}
\numberwithin{equation}{section}
\numberwithin{table}{section}
\numberwithin{figure}{section}
\begin{document}

\begin{abstract}
We consider the numerical approximation of single phase flow in porous media by a mixed finite element method with mass lumping. 
Our work extends previous results of Wheeler and Yotov, who showed that mass lumping together with an appropriate choice of basis 
allows to eliminate the flux variables locally and to reduced the mixed problem in this way to a finite volume discretization for the pressure only. 
Here we construct second order approximations for hybrid meshes in two and three space dimensions which, similar to the method of Wheeler and Yotov, allows the local elimination of the flux variables. A full convergence analysis of the method is given for which new arguments and, in part, also new quadrature rules and finite elements are required. Computational tests are presented for illustration of the theoretical results.
\end{abstract}

\maketitle

\begin{quote}
\noindent
{\small {\bf Keywords:}
porous medium equation,
mixed finite elements,
mass-lumping,
multipoint flux method
}
\end{quote}

\begin{quote}
\noindent
{\small {\bf AMS-classification (2000):}
35F45, 65N30, 76M10
}
\end{quote}

\section{Introduction} \label{sec:intro}

We consider the numerical approximation of single phase flow through a saturated porous medium modeled by the Darcy law and the continuity equation 
\begin{align}
\Kmo \u + \nabla p &= 0 \qquad \text{in } \Omega, \label{eq:sys1}\\ 
           \div\,\u &= f \qquad \text{in } \Omega. \label{eq:sys2}
\end{align}
Here $\Omega$ is the computational domain, $u$ denotes the flow velocity, $p$ represents the pressure in the fluid, $\K$ is the the hydraulic conductivity tensor, and $f$ are the source terms. For ease of notation, we assume that the pressure is known on the whole boundary, i.e.
\begin{align}
	p  =  g \qquad \text{on }\partial\Omega, \label{eq:sys3}
\end{align}
but other types of boundary conditions could be considered with minor modifications.

The simulation of problems of the form \eqref{eq:sys1}--\eqref{eq:sys3} is of practical relevance, i.e., in oil recovery or groundwater hydrology \cite{Carlson06,ToddMayes04}. 
In such applications, the conductivity $\K$ is usually anisotropic and spatially inhomogeneous, and mixed finite element methods, which have a local conservation property and which perform well for rough and anisotropic coefficients, seem particularly well-suited for practical computations. 

A major drawback of mixed finite element approximations of \eqref{eq:sys1}--\eqref{eq:sys3} is that 
they naturally lead to algebraic saddle point problems that are larger and more difficult to solve than the 
symmetric positive definite systems obtained by more standard discretization schemes applied to the reduced problem 
\begin{align}
-\div (\K \nabla p)  &= f \qquad \text{in } \Omega, \label{eq:sys4} \\
p &= g \qquad \text{on } \partial\Omega, \label{eq:sys4a}
\end{align}
which results from elimination of the velocity variable $u$ in \eqref{eq:sys1}--\eqref{eq:sys3}.
An elegant approach that allows to keep the advantages of the mixed finite element approximation and, at the same time, to eliminate the difficulties arising from the saddle-point structure, was proposed by Wheeler and Yotov \cite{WheelerYotov06}. They use special quadrature rules and appropriate basis functions for the approximation of the velocity, and consider a discrete variational approximation of the following form: Find $(\u_h,p_h) \in \V_h\times Q_h$ such that 
\begin{alignat}{2}
(\Kmo \u_h,\v_h)_h - (p_h,\div\,\v_h) &= \langle g, \n \cdot \v_h \rangle_{\partial\Omega} \qquad && \text{for all } \v_h \in \V_h\subseteq H(\div,\Omega), \label{eq:var1h} \\
(\div\,\u_h, q_h) &= (f,q_h) \qquad && \text{for all } q_h \in Q_h\subseteq L^2(\Omega). \label{eq:var2h}
\end{alignat}
Here $(\cdot,\cdot)_h$ denotes an approximation of the $L^2$--scalar product $(\cdot,\cdot)$ over $\Omega$ 
which is obtained by appropriate numerical integration on every element. 
The degrees of freedom and quadrature rules considered in \cite{WheelerYotov06} are depicted in Figure~\ref{fig:BDM}.
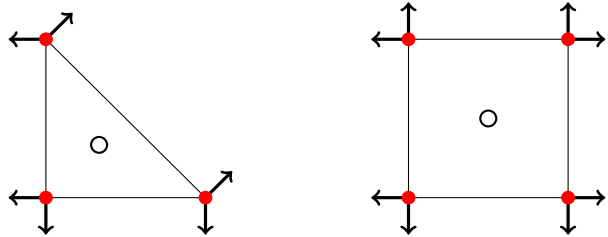
\begin{figure}[ht!]
\centering
\begin{tikzpicture}[scale=0.7]
\draw (0,0) -- (3,0);
\draw (0,0) -- (0,3);
\draw (3,0) -- (0,3);
\draw[very thick,->] (0,3) -- (0.5,3.5);
\draw[very thick,->] (3,0) -- (3.5,0.5);
\draw[very thick,->] (0,0) -- (-0.70,0);
\draw[very thick,->] (0,3) -- (-0.70,3);
\draw[very thick,->] (0,0) -- (0,-0.70);
\draw[very thick,->] (3,0) -- (3,-0.70);
\draw[fill,red] (0,0) circle (0.12cm);
\draw[fill,red] (3,0) circle (0.12cm);
\draw[fill,red] (0,3) circle (0.12cm);
\draw[thick] (1,1) circle (0.15cm);
\end{tikzpicture}
\qquad\qquad
\centering
\begin{tikzpicture}[scale=0.7]
\draw (0,0) -- (3,0);
\draw (0,0) -- (0,3);
\draw (3,0) -- (3,3);
\draw (0,3) -- (3,3);
\draw[very thick,->] (0,0) -- (-0.7,0);
\draw[very thick,->] (0,0) -- (0,-0.7);
\draw[very thick,->] (0,3) -- (0,3.7);
\draw[very thick,->] (0,3) -- (-0.7,3);
\draw[very thick,->] (3,0) -- (3.7,0);
\draw[very thick,->] (3,0) -- (3,-0.70);
\draw[very thick,->] (3,3) -- (3.7,3);
\draw[very thick,->] (3,3) -- (3,3.7);
\draw[fill,red] (0,0) circle (0.12cm);
\draw[fill,red] (3,0) circle (0.12cm);
\draw[fill,red] (0,3) circle (0.12cm);
\draw[fill,red] (3,3) circle (0.12cm);
\draw[thick] (1.5,1.5) circle (0.15cm);
\end{tikzpicture}
\caption{Finite elements and quadrature points for the multipoint flux mixed finite element method proposed in \cite{WheelerYotov06}. For both triangles and quadrilaterals, the local approximation space is the $\BDM_1$ element and the vertex rule is used for numerical integration. The pressure spaces is $\P_0$.\label{fig:BDM}}
\end{figure}
A particular choice of basis functions for the spaces $V_h$ and $Q_h$ leads to a linear algebraic system of the form
\begin{align}
\ttMh \ttu - \ttB^\top \ttp &= \ttg,  \label{eq:alg1} \\
     \ttB \ttu              &= \ttf,  \label{eq:alg2}
\end{align}
in which the mass matrix $\ttMh$ for the velocity variable is block diagonal. This allows to locally eliminate $\ttu$ from the system, very similar to the continuous level, and to obtain a reduced problem for the pressure variable of the form 
\begin{align} \label{eq:alg4}
\ttB \ttMh^{-1} \ttB^\top \ttp &= \ttf + \ttB \ttMh^{-1} \ttg,
\end{align}
By the block diagonal structure, the mass matrix $\ttMh$ has a sparse inverse, and therefore the system matrix $\ttB \ttMh^{-1} \ttB^\top$ is sparse and has a compact stencil.
Since the vector $\ttp$ here represents local cell averages of the pressure, the system \eqref{eq:alg4} can be interpreted as a finite volume discretization of the reduced problem \eqref{eq:sys3}--\eqref{eq:sys4}, 
which is closely related to certain finite difference and finite volume methods based on multipoint flux approximations \cite{Aavatsmark98,KlausenWinther06}. Due to this connection, the above approach was called a \emph{multipoint flux mixed finite element method} in \cite{WheelerYotov06}; we refer to \cite{ArbogastWheelerYotov97,Vohralik06,VohralikWohlmuth13} for a detailed discussion of different discretization methods for subsurface flow and their mutual relations.

Besides the algebraic properties mentioned above, the paper \cite{WheelerYotov06} also provides a detailed numerical analysis of the approach, 
including error estimates of first order
\begin{align} \label{eq:oldrate1}
\|\u-\u_h\|_{L^2(\Omega)} + \|p-p_h\|_{L^2(\Omega)} \le C h, 
\end{align}
which are optimal in view of the pressure approximation by piecewise constants. 
By duality arguments, one can even obtain super-convergence for the pressure  
\begin{align} \label{eq:oldrate2}
\|\pi_h^0 p - p_h\|_{L^2(\Omega)} \le C h^2, 
\end{align}
provided that the domain $\Omega$ is convex and that the conductivity $\K$ is sufficiently smooth. 
Here $\pi_h^0 : L^2(\Omega) \to P_0(\T_h)$ denotes the projection onto piecewise constants over the mesh.
Let us note that, although linear finite elements are used for the velocity approximation, second order convergence 
in the velocity is in general not valid, which can be 
explained by the consistency error introduced by numerical integration in the mass lumping procedure.

The paper \cite{WheelerYotov06} covers triangular and tetrahedral grids, as well as quadrilateral meshes consisting of (slightly perturbed) parallelograms. 
In \cite{IngramWheelerYotov10}, the extension to hexahedral meshes was considered, which required the construction of an extension of the BDDF finite element~\cite{BDDF87}. 
The case of general quadrilateral or hexahedral grids, resulting from non-affine transformations of corresponding reference elements, was treated in \cite{WheelerXueYotov12}.
A similar analysis was developed previously for the investigation of related finite difference and finite volume methods \cite{AavartsmarkEigstadKlausenWheelerYotov07,KlausenStephansen12}. 
In a recent preprint \cite{AKLY17}, high order approximations on quadrilateral and hexahedral grids have been considered.

\bigskip

In this paper, we propose and analyze a second order multipoint flux mixed finite element method on hybrid meshes which, in two dimensions, is based on local approximation spaces and quadrature rules as outlined in Figure~\ref{fig:RT}.
\begin{figure}[ht!]
\centering
\begin{tikzpicture}[scale=0.7]
\draw (0,0) -- (3,0);
\draw (0,0) -- (0,3);
\draw (3,0) -- (0,3);
\draw[very thick,->] (0,3) -- (0.5,3.5);
\draw[very thick,->] (3,0) -- (3.5,0.5);
\draw[very thick,->] (0,0) -- (-0.70,0);
\draw[very thick,->] (0,3) -- (-0.70,3);
\draw[very thick,->] (0,0) -- (0,-0.70);
\draw[very thick,->] (3,0) -- (3,-0.70);
\draw[very thick,->] (1,1) -- (1,1.70);
\draw[very thick,->] (1,1) -- (1.70,1);
\draw[fill,red] (0,0) circle (0.12cm);
\draw[fill,red] (3,0) circle (0.12cm);
\draw[fill,red] (0,3) circle (0.12cm);
\draw[fill,red] (1,1) circle (0.12cm);
\draw[thick] (0.5,0.5) circle (0.15cm);
\draw[thick] (0.5,2) circle (0.15cm);
\draw[thick] (2,0.5) circle (0.15cm);
\end{tikzpicture}
\qquad\qquad
\centering
\begin{tikzpicture}[scale=0.7]
\draw (0,0) -- (3,0);
\draw (0,0) -- (0,3);
\draw (3,0) -- (3,3);
\draw (0,3) -- (3,3);
\draw[very thick,->] (0,0) -- (-0.7,0);
\draw[very thick,->] (0,0) -- (0,-0.7);
\draw[very thick,->] (0,3) -- (0,3.7);
\draw[very thick,->] (0,3) -- (-0.7,3);
\draw[very thick,->] (3,0) -- (3.7,0);
\draw[very thick,->] (3,0) -- (3,-0.70);
\draw[very thick,->] (3,3) -- (3.7,3);
\draw[very thick,->] (3,3) -- (3,3.7);
\draw[very thick,->] (1.5,1.5) -- (1.5,2.2);
\draw[very thick,->] (1.5,1.5) -- (2.2,1.5);
\draw[fill,red] (0,0) circle (0.12cm);
\draw[fill,red] (3,0) circle (0.12cm);
\draw[fill,red] (0,3) circle (0.12cm);
\draw[fill,red] (3,3) circle (0.12cm);
\draw[fill,red] (1.5,1.5) circle (0.12cm);
\draw[thick] (0.7,0.7) circle (0.15cm);
\draw[thick] (2.3,0.7) circle (0.15cm);
\draw[thick] (0.7,2.3) circle (0.15cm);
\end{tikzpicture}
\caption{Finite elements and quadrature points for the multipoint flux mixed finite element method considered here.  The local approximation space for the triangle is the Raviart-Thomas element $\RT_1$ and that for quadrilaterals is the Brezzi-Douglas-Fortin-Marini element $\BDFM_2$. In both cases, the local pressure space is given by $\P_1$ and the vertices and midpoint are chosen as integration points. \label{fig:RT}}
\end{figure}
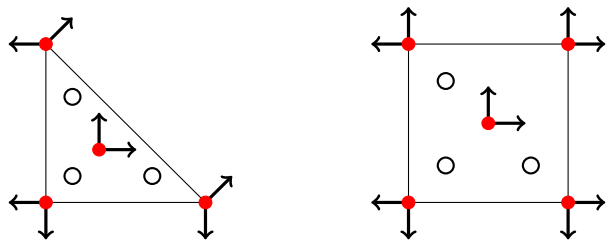
Similar to before, the selection of appropriate basis functions allows the systematic reduction to a cell-centered discretization for the reduced problem \eqref{eq:sys4}--\eqref{eq:sys4a}. 
The algebraic problem \eqref{eq:alg4} here amounts to an approximation of the pressure with piecewise linear but discontinuous finite elements,
and our approach could therefore, in principle, also be interpreted as a particular discontinuous Galerkin approximation.

The accuracy of the quadrature formula depicted in Figure~\ref{fig:RT} seems to be insufficient at first glance. Indeed, the same arguments of proof used in \cite{WheelerYotov06} would lead to sub-optimal convergence orders. Our analysis, therefore, relies on special properties of the discrete spaces that allow to show the desired convergence rates of second order, i.e.
\begin{align} \label{eq:newrate1}
\|\u - \u_h\|_{L^2(\Omega)} + \|\pi_h^0(p - p_h)\|_{L^2(\Omega)} + 
\|p - \widetilde p_h \|_{L^2(\Omega)} \le C h^2,
\end{align}
where $\widetilde p_h^{\,1}$ is a piecewise linear or quadratic approximation for the pressure which can be obtained, e.g., 
by the post-processing strategy of Stenberg \cite{Stenberg91}. 
The resulting multipoint flux mixed finite element scheme thus yields the optimal approximation order in both variables. Using duality arguments, we will also establish super-convergence estimates for the pressure which here can be expressed as
\begin{align} \label{eq:newrate2}
\|\pi_h^0(p - p_h)\|_{L^2(\Omega)} + 
\|p - \widetilde p_h\|_{L^2(\Omega)} \le C h^3,
\end{align}
where $\widetilde p_h^{\,2}$ now is a piecewise quadratic approximation obtained by local post-processing. 

We develop a general framework for the convergence analysis of mixed finite element methods with mass lumping which allows us to treat, in a unified manner, triangular and quadrilateral elements in two dimensions as well as tetrahedral, prismatic, and hexahedral elements in three dimensions. Our analysis also automatically covers hybrid meshes consisting of different element types; see Figure~\ref{fig:HYBRID} for a sketch.

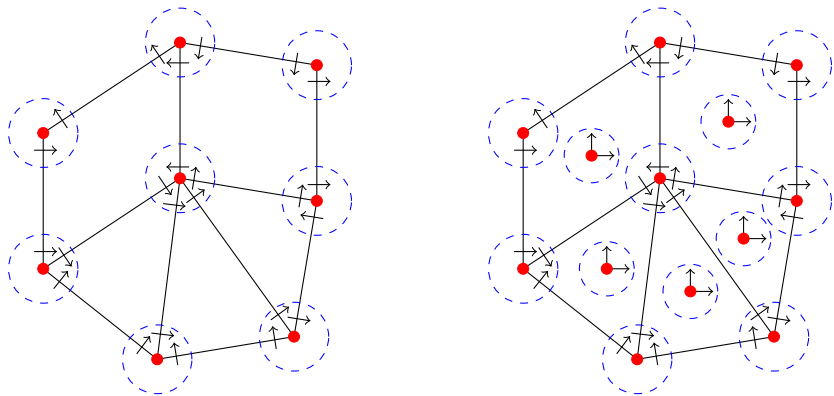
\begin{figure}[ht!]
\centering
\begin{tikzpicture}[scale=0.6]
\draw (0,0) -- (3,2);
\draw (0,0) -- (0,3);
\draw (0,3) -- (3,5);
\draw (3,2) -- (3,5);
\draw (3,2) -- (6,1.5);
\draw (3,5) -- (6,4.5);
\draw (6,1.5) -- (6,4.5);
\draw (3,2) -- (5.5,-1.5);
\draw (6,1.5) -- (5.5,-1.5);
\draw (3,2) -- (2.5,-2);
\draw (0,0) -- (2.5,-2);
\draw (5.5,-1.5) -- (2.5,-2);

\draw[->] (3.2,2.25) -- (2.7,2.25);
\draw[->] (2.53,2.05) -- (2.81,1.63);
\draw[->] (2.63,1.44) -- (3.12,1.38);
\draw[->] (3.14,1.46) -- (3.54,1.75);
\draw[->] (3.27,1.75) -- (3.35,2.25);

\draw[->] (3.2,4.55) -- (2.7,4.55);
\draw[->] (2.66,4.53) -- (2.38,4.95);
\draw[->] (3.48,5.12) -- (3.40,4.63);
\draw[->] (5.58,4.77) -- (5.50,4.28);
\draw[->] (5.61,1.36) -- (5.69,1.86);
\draw[->] (5.80,4.14) -- (6.30,4.14);
\draw[->] (5.80,1.86) -- (6.30,1.86);
\draw[->] (6.14,1.11) -- (5.64,1.19);
\draw[->] (5.36,-1.07) -- (5.86,-1.15);
\draw[->] (4.99,-1.13) -- (5.39,-0.84);
\draw[->] (0.34,0.47) -- (0.62,0.05);
\draw[->] (2.37,-1.42) -- (2.87,-1.48);
\draw[->] (0.53,3.11) -- (0.25,3.53);
\draw[->] (-0.2,0.38) -- (0.3,0.38);
\draw[->] (-0.2,3-0.38) -- (0.3,3-0.38);
\draw[->] (2.05,-1.88) -- (2.34,-1.49);
\draw[->] (0.23,-0.44) -- (0.54,-0.05);
\draw[->] (2.95,-2.13) -- (2.87,-1.63);
\draw[->] (5.11,-1.77) -- (5.03,-1.27);

\draw[blue,dashed] (0,0) circle (0.76cm);
\draw[blue,dashed] (3,2) circle (0.76cm);
\draw[blue,dashed] (3,5) circle (0.76cm);
\draw[blue,dashed] (2.5,-2) circle (0.76cm);
\draw[blue,dashed] (0,3) circle (0.76cm);
\draw[blue,dashed] (6,1.5) circle (0.76cm);
\draw[blue,dashed] (5.5,-1.5) circle (0.76cm);
\draw[blue,dashed] (6,4.5) circle (0.76cm);

\draw[fill,red] (0,0) circle (0.12cm);
\draw[fill,red] (3,2) circle (0.12cm);
\draw[fill,red] (3,5) circle (0.12cm);
\draw[fill,red] (2.5,-2) circle (0.12cm);
\draw[fill,red] (0,3) circle (0.12cm);
\draw[fill,red] (6,1.5) circle (0.12cm);
\draw[fill,red] (5.5,-1.5) circle (0.12cm);
\draw[fill,red] (6,4.5) circle (0.12cm);
\end{tikzpicture}
\qquad\qquad
\centering
\begin{tikzpicture}[scale=0.6]
\draw (0,0) -- (3,2);
\draw (0,0) -- (0,3);
\draw (0,3) -- (3,5);
\draw (3,2) -- (3,5);
\draw (3,2) -- (6,1.5);
\draw (3,5) -- (6,4.5);
\draw (6,1.5) -- (6,4.5);
\draw (3,2) -- (5.5,-1.5);
\draw (6,1.5) -- (5.5,-1.5);
\draw (3,2) -- (2.5,-2);
\draw (0,0) -- (2.5,-2);
\draw (5.5,-1.5) -- (2.5,-2);

\draw[->] (1.5,2.5) -- (2.0,2.5);
\draw[->] (1.5,2.5) -- (1.5,3.0);
\draw[->] (4.5,13/4) -- (4.5+0.5,13/4);
\draw[->] (4.5,13/4) -- (4.5,13/4+0.5);
\draw[->] (14.5/3,2/3) -- (14.5/3+0.5,2/3);
\draw[->] (14.5/3,2/3) -- (14.5/3,2/3+0.5);
\draw[->] (11/3,-0.5) -- (11/3+0.5,-0.5);
\draw[->] (11/3,-0.5) -- (11/3,-0.5+0.5);
\draw[->] (5.5/3,0) -- (5.5/3+0.5,0);
\draw[->] (5.5/3,0) -- (5.5/3,0+0.5);

\draw[->] (3.2,2.25) -- (2.7,2.25);
\draw[->] (2.53,2.05) -- (2.81,1.63);
\draw[->] (2.63,1.44) -- (3.12,1.38);
\draw[->] (3.14,1.46) -- (3.54,1.75);
\draw[->] (3.27,1.75) -- (3.35,2.25);

\draw[->] (3.2,4.55) -- (2.7,4.55);
\draw[->] (2.66,4.53) -- (2.38,4.95);
\draw[->] (3.48,5.12) -- (3.40,4.63);
\draw[->] (5.58,4.77) -- (5.50,4.28);
\draw[->] (5.61,1.36) -- (5.69,1.86);
\draw[->] (5.80,4.14) -- (6.30,4.14);
\draw[->] (5.80,1.86) -- (6.30,1.86);
\draw[->] (6.14,1.11) -- (5.64,1.19);
\draw[->] (5.36,-1.07) -- (5.86,-1.15);
\draw[->] (4.99,-1.13) -- (5.39,-0.84);
\draw[->] (0.34,0.47) -- (0.62,0.05);
\draw[->] (2.37,-1.42) -- (2.87,-1.48);
\draw[->] (0.53,3.11) -- (0.25,3.53);
\draw[->] (-0.2,0.38) -- (0.3,0.38);
\draw[->] (-0.2,3-0.38) -- (0.3,3-0.38);
\draw[->] (2.05,-1.88) -- (2.34,-1.49);
\draw[->] (0.23,-0.44) -- (0.54,-0.05);
\draw[->] (2.95,-2.13) -- (2.87,-1.63);
\draw[->] (5.11,-1.77) -- (5.03,-1.27);

\draw[blue,dashed] (0,0) circle (0.76cm);
\draw[blue,dashed] (3,2) circle (0.76cm);
\draw[blue,dashed] (3,5) circle (0.76cm);
\draw[blue,dashed] (2.5,-2) circle (0.76cm);
\draw[blue,dashed] (0,3) circle (0.76cm);
\draw[blue,dashed] (6,1.5) circle (0.76cm);
\draw[blue,dashed] (5.5,-1.5) circle (0.76cm);
\draw[blue,dashed] (6,4.5) circle (0.76cm);

\draw[blue,dashed] (1.5,2.5) circle (0.6cm);
\draw[blue,dashed] (4.5,13/4) circle (0.6cm);
\draw[blue,dashed] (14.5/3,2/3) circle (0.6cm);
\draw[blue,dashed] (11/3,-0.5) circle (0.6cm);
\draw[blue,dashed] (5.5/3,0) circle (0.6cm);

\draw[fill,red] (0,0) circle (0.12cm);
\draw[fill,red] (3,2) circle (0.12cm);
\draw[fill,red] (3,5) circle (0.12cm);
\draw[fill,red] (2.5,-2) circle (0.12cm);
\draw[fill,red] (0,3) circle (0.12cm);
\draw[fill,red] (6,1.5) circle (0.12cm);
\draw[fill,red] (5.5,-1.5) circle (0.12cm);
\draw[fill,red] (6,4.5) circle (0.12cm);
\draw[fill,red] (1.5,2.5) circle (0.12cm);
\draw[fill,red] (4.5,13/4) circle (0.12cm);
\draw[fill,red] (14.5/3,2/3) circle (0.12cm);
\draw[fill,red] (11/3,-0.5) circle (0.12cm);
\draw[fill,red] (5.5/3,0) circle (0.12cm);

\end{tikzpicture}
\caption{Example of a hybrid mesh including and the distribution of quadrature points and the coupling of degrees of freedom. 
First order discretization (left) and the second order discretization (right).\label{fig:HYBRID}}
\end{figure}

Let us note that the results presented in this paper may also be relevant for the numerical solution of wave propagation problems, where mass lumping is required to allow for an efficient realization of explicit time stepping schemes. Related mass lumping strategies have been developed for rectangular and simplicial elements in \cite{CohenMonk98} and in \cite{ElmkiesJoly972d,ElmkiesJoly973d} for Maxwell's equations; let us refer to \cite{Cohen02} for an overview. 
In \cite{EggerRadu18}, we provided a full convergence analysis for a mixed finite element approximation of the acoustic wave equation with mass lumping 
based on the method of Wheeler and Yotov \cite{WheelerYotov06}.
%
%

\bigskip

The remainder of the manuscript is organized as follows:
In Section~\ref{sec:prelim}, we introduce our notation and some basic assumptions that guarantee the well-posedness of the problem. 
In Section~\ref{sec:mixed}, we then present a general framework for the analysis of mixed finite element approximations and show basic properties such as existence and stability.
The first order estimates derived in Section~\ref{sec:first} also cover most of the results presented in \cite{WheelerYotov06}.
In Section~\ref{sec:improved}, we then establish the improved convergence rates \eqref{eq:newrate1}--\eqref{eq:newrate2} under some general assumptions; this can be seen as the main contribution 
of the paper.
Appropriate finite elements and quadrature rules for different element types in two and three space dimensions are discussed in 
Section~\ref{sec:2d} and some numerical results are presented for illustration in Section~\ref{sec:num}, including a comparison with 
the first order methods of \cite{WheelerYotov06}.

\section{Preliminaries} \label{sec:prelim}

In the following, we introduce our notation and some basic assumptions that are required 
throughout the paper. In addition, we present a general framework for the 
formulation of mixed finite element approximations with mass lumping.

\subsection{Problem data, function spaces, and well-posedness}

Throughout the presentation, $\Omega$ denotes a bounded Lipschitz domain in two or three space dimensions.
In order to ensure the well-posedness of problem \eqref{eq:sys1}--\eqref{eq:sys3}, 
we assume that $\K \in L^\infty(\Omega)^{d \times d}$ and that $\K(x)$ is symmetric and uniformly positive definite, i.e.,
\begin{align}
\underline k |\xi|^2 \le \xi^\top \K(x) \xi \le \overline k |\xi|^2,
\end{align}
for a.e. $x\in\Omega$ with some constants $0 < \underline k,\overline k<\infty$. For our convergence analysis, we will later impose additional conditions on $\Omega$ and $\K$ as required.

We denote by $L^p(\Omega)$ and $W^{k,p}(\Omega)$ the usual Lebesgue and Sobolev spaces, 
and we write $H^k(\Omega)=W^{k,2}(\Omega)$ for ease of notation. The scalar product and norm of $L^2(\Omega)$ are denoted by $(\cdot,\cdot)$ and $\|\cdot\|_{L^2}$.
We further define 
$H(\div,\Omega)=\{\u \in L^2(\Omega)^d : \div\,\u \in L^2(\Omega)\}$ and denote by 
$\|\u\|_{H(\div,\Omega)} = (\|\u\|_{L^2(\Omega)}^2 + \|\div\,\u\|_{L^2(\Omega)}^2)^{1/2}$ the natural norm for this space. 
Similar notation is used for the norm of other spaces.

\medskip 

Let us recall some well-known results concerning the analysis of our model problem. 
\begin{lemma} \label{lem:wellposed}
For any $f \in L^2(\Omega)$ and $g \in H^{1/2}(\partial\Omega)$, 
there exists a unique solution 
$\u \in H(\div,\Omega)$, $p \in H^1(\Omega)$ of problem \eqref{eq:sys1}--\eqref{eq:sys3} and  
\begin{align*}
\|\u\|_{H(\div)} + \|p\|_{H^1(\Omega)} \le C (\|f\|_{L^2(\Omega)} + |g|_{H^{1/2}(\partial\Omega)}). 
\end{align*}
\end{lemma}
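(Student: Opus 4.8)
The plan is to establish well-posedness of the mixed system \eqref{eq:sys1}--\eqref{eq:sys3} by reduction to the standard theory of saddle-point problems (the Babuška--Brezzi framework), combined with classical elliptic regularity for the reduced scalar problem \eqref{eq:sys4}--\eqref{eq:sys4a}. First I would homogenize the boundary condition: pick a lifting $p_g \in H^1(\Omega)$ with $p_g|_{\partial\Omega} = g$ in the trace sense, whose existence and bound $\|p_g\|_{H^1(\Omega)} \le C |g|_{H^{1/2}(\partial\Omega)}$ is the standard trace-extension theorem. Writing $p = p_g + \tilde p$ with $\tilde p|_{\partial\Omega}=0$, the system becomes a mixed problem with homogeneous essential data and a modified right-hand side controlled by $\|f\|_{L^2}$ and $|g|_{H^{1/2}}$.

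Next I would verify the two Brezzi conditions for the bilinear forms $a(\u,\v) = (\Kmo \u, \v)$ on $H(\div,\Omega)$ and $b(\v, q) = (q, \div\,\v)$ coupling $H(\div,\Omega)$ with $L^2(\Omega)$. Coercivity of $a(\cdot,\cdot)$ on the full space $H(\div,\Omega)$ does not hold directly, so the relevant requirement is coercivity on the kernel $Z = \{\v \in H(\div,\Omega) : \div\,\v = 0\}$; there the uniform positive-definiteness of $\K$ (equivalently of $\Kmo$, with bounds $\overline k^{-1}, \underline k^{-1}$) gives $a(\v,\v) \ge \underline{k}^{-1}\|\v\|_{L^2}^2 = \underline{k}^{-1}\|\v\|_{H(\div)}^2$ on $Z$. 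The inf-sup (LBB) condition for $b$, namely that $\div : H(\div,\Omega) \to L^2(\Omega)$ is surjective with bounded right-inverse, is classical: given $q \in L^2(\Omega)$ one solves an auxiliary Poisson problem $-\Delta \phi = q$ and takes $\v = \nabla \phi$, which lies in $H(\div,\Omega)$ with $\|\v\|_{H(\div)} \le C\|q\|_{L^2}$. Brezzi's theorem then yields a unique $(\u, \tilde p) \in H(\div,\Omega)\times L^2(\Omega)$ with the stability estimate in the $H(\div)\times L^2$ norms.

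The remaining and more delicate point is upgrading the $L^2$-regularity of the pressure to $H^1$, since Brezzi's abstract theorem only delivers $\tilde p \in L^2(\Omega)$. Here I would exploit the structure: eliminating $\u$ via \eqref{eq:sys1} shows that $p$ is the weak solution of the scalar elliptic problem $-\div(\K\nabla p) = f$ with Dirichlet data $g$. Under the standing hypotheses ($\K$ bounded and uniformly elliptic, $\Omega$ Lipschitz), the Lax--Milgram theorem applied in $H^1_0(\Omega)$ to the homogenized problem gives $\tilde p \in H^1(\Omega)$ with $\|\tilde p\|_{H^1} \le C(\|f\|_{L^2} + |g|_{H^{1/2}})$, whence $p = p_g + \tilde p \in H^1(\Omega)$ with the stated bound. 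Finally, reading \eqref{eq:sys1} as $\u = -\K\nabla p$ and using $\div\,\u = f$ recovers $\u \in H(\div,\Omega)$ and closes the velocity estimate.

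\emph{The main obstacle} I anticipate is the consistency between the two solution routes: one must check that the $L^2$-pressure produced by the mixed (Brezzi) argument coincides with the $H^1$-pressure produced by the primal (Lax--Milgram) argument, so that the regularity gained in the scalar formulation legitimately transfers to the mixed solution. This is handled by noting that both solve the same weak equations and invoking uniqueness, but it is the step where the argument must be written carefully rather than waved through; everything else is an application of standard functional-analytic machinery.
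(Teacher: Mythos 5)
Your proof is correct, but it takes a genuinely different and more roundabout route than the paper's. The paper argues purely on the primal side: problem \eqref{eq:sys1}--\eqref{eq:sys3} is equivalent to the reduced problem \eqref{eq:sys4}--\eqref{eq:sys4a}, Lax--Milgram (after the same trace lifting you use) gives a unique $p \in H^1(\Omega)$ with the stated bound, and the velocity is then recovered by setting $\u = -\K\nabla p \in L^2(\Omega)^d$, which lies in $H(\div,\Omega)$ because \eqref{eq:sys2} forces $\div\,\u = f \in L^2(\Omega)$. In that route the Brezzi machinery never appears: existence comes from the primal solution, and uniqueness comes from the fact that the pressure of any solution of \eqref{eq:sys1}--\eqref{eq:sys3} solves the reduced problem, so your ``main obstacle'' --- identifying the mixed solution with the primal one --- simply does not arise. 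In your argument, by contrast, the Brezzi step is logically superfluous for this particular lemma: after it you still need Lax--Milgram to obtain $p \in H^1(\Omega)$, and at that point the primal solution together with $\u = -\K\nabla p$ already yields existence, while uniqueness could equally be had primally. What your extra work buys is essentially the content of Lemma~\ref{eq:lem2} and Lemma~\ref{lem:infsup} of the paper, namely the well-posedness of the mixed variational formulation \eqref{eq:var1}--\eqref{eq:var2} and its identification with the strong solution, which the paper establishes separately right after this lemma; so nothing is wasted in the larger context, but as a proof of Lemma~\ref{lem:wellposed} alone it is heavier than necessary.

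Two small slips, neither fatal: the kernel coercivity constant for $(\Kmo\v,\v)$ is $\overline k^{-1}$ (it comes from the \emph{upper} bound $\xi^\top\K(x)\xi \le \overline k|\xi|^2$), not $\underline k^{-1}$; and with $-\Delta\phi = q$ you should take $\v = -\nabla\phi$ so that $\div\,\v = q$. Also, in the mixed setting the Dirichlet datum is a natural rather than an essential condition, so your homogenization $p = p_g + \tilde p$ is really only needed for the primal (Lax--Milgram) step, not for the saddle-point formulation itself.
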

\begin{proof}
Problem~\eqref{eq:sys1}--\eqref{eq:sys3} is equivalent to the reduced problem  \eqref{eq:sys3}--\eqref{eq:sys4},
for which existence of a unique solution $p \in H^1(\Omega)$ follows 
from the Lax-Milgram theorem. Via equation \eqref{eq:sys1}, one can recover the velocity field $\u=-\K \nabla p \in L^2(\Omega)^d$, and by inserting thisin equation \eqref{eq:sys2}, one can then deduce that $\u \in H(\div,\Omega)$. 
\end{proof}
\begin{remark}
Let us note that on Lipschitz domains, one can actually deduce that $\u \in L^p(\Omega)^d$ for some $p>2$, which allows to define normal traces $\n\cdot \u$ of the velocity along sufficiently smooth $(d-1)$-dimensional manifolds; see \cite{BoffiBrezziFortin13} for details. 
 \end{remark}
By standard arguments, one can further show the following statement.
\begin{lemma} \label{eq:lem2} 
Under the assumptions of Lemma~\ref{lem:wellposed}, the solution of Problem~\eqref{eq:sys1}--\eqref{eq:sys3} 
corresponds to the unique solution $\u \in H(\div,\Omega)$, $p \in L^2(\Omega)$ of the variational problem
\begin{alignat}{2}
(\K^{-1} \u,\v) - (p, \div\,\v) &= \langle g,n \cdot \v
\rangle_{\partial\Omega} \qquad && \forall\,\v\in H(\div,\Omega), \label{eq:var1}\\
(\div\,\u,q) &= (f,q) \qquad && \forall\,q\in L^2(\Omega). \label{eq:var2}
\end{alignat}
\end{lemma}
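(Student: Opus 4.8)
The plan is to establish the equivalence by showing that the solution pair $(\u,p)$ from Lemma~\ref{lem:wellposed} satisfies the variational system \eqref{eq:var1}--\eqref{eq:var2}, and conversely that any solution of the variational system solves the original strong problem \eqref{eq:sys1}--\eqref{eq:sys3}. Since existence and uniqueness at the strong level are already available from Lemma~\ref{lem:wellposed}, the main work is to verify the two variational identities and then to argue that the variational formulation itself admits a unique solution, so that the two solution concepts genuinely coincide.

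First I would take the strong solution $\u \in H(\div,\Omega)$, $p \in H^1(\Omega)$ and derive \eqref{eq:var1}. Starting from \eqref{eq:sys1}, namely $\Kmo \u + \nabla p = 0$, I test against an arbitrary $\v \in H(\div,\Omega)$ to obtain $(\Kmo \u,\v) + (\nabla p,\v) = 0$. The key step is an integration by parts on the term $(\nabla p,\v)$: since $p \in H^1(\Omega)$ and $\v \in H(\div,\Omega)$, the formula $(\nabla p,\v) = -(p,\div\,\v) + \langle p, \n\cdot\v\rangle_{\partial\Omega}$ holds in the appropriate duality pairing, and inserting the boundary condition $p = g$ on $\partial\Omega$ from \eqref{eq:sys3} turns the boundary term into $\langle g,\n\cdot\v\rangle_{\partial\Omega}$. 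This yields exactly \eqref{eq:var1}. The second identity \eqref{eq:var2} is immediate: testing $\div\,\u = f$ from \eqref{eq:sys2} against any $q \in L^2(\Omega)$ gives $(\div\,\u,q) = (f,q)$.

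For the converse direction, I would start from a solution $(\u,p)$ of \eqref{eq:var1}--\eqref{eq:var2} and recover the strong equations. Choosing $q$ to range over all of $L^2(\Omega)$ in \eqref{eq:var2} forces $\div\,\u = f$ in $L^2(\Omega)$, which is \eqref{eq:sys2}. To recover \eqref{eq:sys1}, I first restrict $\v$ in \eqref{eq:var1} to $C_c^\infty(\Omega)^d$, so the boundary term vanishes and the integration-by-parts identity gives $(\Kmo\u + \nabla p,\v) = 0$ in the distributional sense; this shows $\nabla p = -\Kmo\u \in L^2(\Omega)^d$, hence $p \in H^1(\Omega)$ and \eqref{eq:sys1} holds. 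Returning to general $\v \in H(\div,\Omega)$ and comparing the boundary terms then identifies the trace of $p$ with $g$, giving \eqref{eq:sys3}.

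The main obstacle, and the step requiring the most care, is the handling of the boundary trace and the integration-by-parts formula: on a general Lipschitz domain one must work with the duality pairing $\langle\cdot,\cdot\rangle_{\partial\Omega}$ between $H^{1/2}(\partial\Omega)$ and $H^{-1/2}(\partial\Omega)$ rather than a genuine boundary integral, and one must be sure that the normal trace $\n\cdot\v$ is well defined for $\v \in H(\div,\Omega)$. This is exactly the regularity issue flagged in the remark following Lemma~\ref{lem:wellposed}. Once the trace theory and Green's formula for $H(\div,\Omega)$ are invoked as standard facts, the remaining bookkeeping is routine, and uniqueness for the variational problem follows from the already-established uniqueness in Lemma~\ref{lem:wellposed} together with the equivalence just shown.
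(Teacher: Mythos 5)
Your proposal is correct in substance and fills in what the paper dismisses with ``by standard arguments'', but it reaches uniqueness by a different mechanism than the one the paper points to. The paper's intended route, signalled in the sentence right after the lemma, is to establish existence and uniqueness of the solution of \eqref{eq:var1}--\eqref{eq:var2} directly via Brezzi's splitting theorem, with the surjectivity of the divergence operator (Lemma~\ref{lem:infsup}) supplying the inf-sup condition; consistency of the strong solution then identifies the two solutions. You instead prove both implications (strong $\Rightarrow$ weak by integration by parts; weak $\Rightarrow$ strong by recovering $\div\,u=f$, then $\nabla p=-\Kmo u\in L^2(\Omega)^d$ from compactly supported test fields, then the boundary condition) and transfer uniqueness from Lemma~\ref{lem:wellposed}. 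Your route is more elementary --- it needs neither Brezzi's theorem nor the inf-sup condition --- and it shows in addition that every variational solution automatically has $p\in H^1(\Omega)$. What it forgoes is the quantitative stability bound for arbitrary data that Brezzi's theorem delivers, which is the template the paper reuses at the discrete level in Lemma~\ref{lem:wellposeda}; for this lemma alone that loss is immaterial. In the final trace-identification step you should also state explicitly that the normal-trace map $v\mapsto n\cdot v$ carries $H(\div,\Omega)$ onto $H^{-1/2}(\partial\Omega)$, so that $\langle p-g, n\cdot v\rangle_{\partial\Omega}=0$ for all admissible $v$ really forces the trace of $p$ to equal $g$; this is precisely the point the remark after Lemma~\ref{lem:wellposed} is careful about.

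One caveat you should not gloss over: with $n$ the outward normal, Green's formula reads $(\nabla p,v)=\langle p, n\cdot v\rangle_{\partial\Omega}-(p,\div\,v)$, so your forward computation actually yields
\begin{align*}
(\Kmo u,v)-(p,\div\,v) = -\langle g, n\cdot v\rangle_{\partial\Omega},
\end{align*}
i.e.\ the boundary term carries a minus sign (as in Wheeler--Yotov \cite{WheelerYotov06}), not the plus sign claimed in your write-up and printed in \eqref{eq:var1} (consistently with \eqref{eq:var1h}). Running your converse step with the plus sign would identify the trace of $p$ with $-g$, contradicting \eqref{eq:sys3} unless $g=0$. So the structure of your argument is sound, but a literally correct execution must either flip the sign of the boundary functional in \eqref{eq:var1} or adopt an inward-normal convention; the assertion ``this yields exactly \eqref{eq:var1}'' hides exactly this discrepancy.
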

The key 
 for establishing the existence of a unique solution $\u \in H(\div,\Omega)$ to the variational problem \eqref{eq:var1}--\eqref{eq:var2}  via Brezzi's splitting theorem \cite{BoffiBrezziFortin13,Brezzi74}
is the surjectivity of the divergence operator, which we state here for later reference.
\begin{lemma} \label{lem:infsup}
For any $q \in L^2(\Omega)$ there exists a function $\v \in H(\div,\Omega)$ such that $q=\div\,\v$ and 
$\|\v\|_{H(\div,\Omega)} \le C \|q\|_{L^2(\Omega)}$ with $C$ depending only on $\Omega$. 
Moreover, one can choose $\v \in H(\div,\Omega) \cap L^p(\Omega)^d$ with $p>2$ and such that
\begin{align*}
\|\v\|_{H(\div,\Omega)} + \|\v\|_{L^p(\Omega)} \le C 
\|q\|_{L^2(\Omega)}.
\end{align*}
\end{lemma}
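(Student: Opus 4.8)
The plan is to reduce the surjectivity of the divergence to the solvability and regularity of an auxiliary Dirichlet problem posed on a smooth auxiliary domain, so as to circumvent the limited elliptic regularity available on a general Lipschitz domain. First I would fix an open ball $B$ with $\overline\Omega \subset B$ and extend the datum $q$ by zero to a function $\tilde q \in L^2(B)$ satisfying $\|\tilde q\|_{L^2(B)} = \|q\|_{L^2(\Omega)}$. On the smooth domain $B$, the Dirichlet problem $-\Delta\phi = \tilde q$ in $B$, $\phi = 0$ on $\partial B$, admits a unique weak solution $\phi \in H_0^1(B)$ by the Lax--Milgram theorem.

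The candidate field is then $\v := -\nabla\phi|_\Omega$. By construction $\div\,\v = -\Delta\phi = \tilde q = q$ in $\Omega$, so it remains only to establish the asserted norm bounds. For the $H(\div)$ estimate I would invoke the standard energy estimate $\|\nabla\phi\|_{L^2(B)} \le C\|\tilde q\|_{L^2(B)}$, which controls $\|\v\|_{L^2(\Omega)}$, while $\|\div\,\v\|_{L^2(\Omega)} = \|q\|_{L^2(\Omega)}$ is immediate; together these yield $\|\v\|_{H(\div,\Omega)} \le C\|q\|_{L^2(\Omega)}$ with $C$ depending only on $B$, hence only on $\Omega$.

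For the improved bound I would use full elliptic regularity on the smooth ball $B$: since $\partial B$ is smooth, $\phi \in H^2(B)$ with $\|\phi\|_{H^2(B)} \le C\|\tilde q\|_{L^2(B)}$. Consequently each component of $\nabla\phi$ lies in $H^1(B)$, and the Sobolev embedding $H^1(B) \hookrightarrow L^p(B)$ --- valid for every $p < \infty$ when $d = 2$ and for $p = 2d/(d-2) > 2$ when $d = 3$ --- gives $\|\v\|_{L^p(\Omega)} \le \|\nabla\phi\|_{L^p(B)} \le C\|\phi\|_{H^2(B)} \le C\|q\|_{L^2(\Omega)}$. Restricting to $\Omega$ and combining with the previous step produces the stated estimate, and one checks that $\nabla\phi \in H^1(B)^d$ restricts to an element of $H(\div,\Omega) \cap L^p(\Omega)^d$ automatically from these embeddings.

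The crucial point --- and the step requiring care --- is that the $H^2$ regularity, and therefore the exponent $p > 2$, are genuinely gained only because the auxiliary problem is posed on the smooth domain $B$ rather than directly on $\Omega$: on a merely Lipschitz $\Omega$ the Poisson solution need not belong to $H^2$, so the zero extension to a ball is exactly what makes the argument work. An alternative, more technical route would invoke the Bogovski\u\i\ right inverse of the divergence, which maps $L^p(\Omega)$ boundedly into $W^{1,p}_0(\Omega)^d$ for $1 < p < \infty$ on Lipschitz domains; the elementary extension argument above is preferable here since it delivers both asserted bounds at once.
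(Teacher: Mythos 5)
Your proof is correct, and it is worth noting that the paper itself gives no proof of this lemma: it only remarks that the second assertion ``follows from regularity results for the Poisson problem on Lipschitz domains'' and points to \cite{BoffiBrezziFortin13}. Your argument takes a different (and more elementary) route than the one the paper gestures at. Instead of invoking elliptic regularity on the Lipschitz domain $\Omega$ itself---where $H^2$ regularity can fail, so that one must appeal to deeper Jerison--Kenig/Meyers-type results giving $\nabla\phi \in L^p$ only for some $p>2$ depending on the Lipschitz character of $\Omega$---you extend $q$ by zero to a ball $B \supset \overline\Omega$, solve the Dirichlet problem on $B$, and exploit full $H^2(B)$ regularity on the smooth (indeed convex) auxiliary domain together with the embedding $H^1(B) \hookrightarrow L^p(B)$. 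All steps check out: the identity $\mathrm{div}\, v = q$ in $\Omega$ follows from $-\Delta\phi = \tilde q$ in the sense of distributions on $B$; the energy estimate yields the $H(\mathrm{div},\Omega)$ bound with a constant depending only on $B$, hence only on $\Omega$ once $B$ is fixed in terms of $\Omega$; and the $H^2$ bound plus Sobolev embedding gives the $L^p$ estimate with an explicit exponent ($p=6$ for $d=3$, any finite $p$ for $d=2$). What your construction buys is a self-contained argument, an exponent $p$ independent of the Lipschitz character of $\Omega$, and in fact the stronger conclusion $v \in H^1(\Omega)^d$; what the Lipschitz-regularity route buys is that it works directly on $\Omega$ without any extension, at the price of substantially harder regularity theory. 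Your closing observation---that the zero extension to a smooth domain is exactly what circumvents the failure of $H^2$ regularity on Lipschitz domains---is precisely the right point, and this extension argument is in fact the classical construction found in the reference the paper cites.
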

The second assertion follows from regularity results for the Poisson problem on Lipschitz domains and allows to define normal traces $\n \cdot\v$ along smooth sub-manifolds of dimension $d-1$, which is required to prove discrete variants of Lemma~\ref{lem:infsup}; see e.g. \cite{BoffiBrezziFortin13} for details. 

\section{Mixed finite element discretization}\label{sec:mixed}
We now introduce our notation and the general form of mixed finite element approximations that will be considered in the rest of the paper.
\subsection{Mesh}
Let $\T_h = \{\E_n : n=1,\ldots,N\}$ denote a geometrically conforming partition of the domain $\Omega$ into $d$--dimensional elements in the sense of \cite{Ciarlet78,ErnGuermond04}. In particular, neighboring elements either share a common vertex, edge, or face. Moreover, we assume that any element $\E \in\T_h$ is the image $\E=F_\E(\widehat \E)$ of one of a finite number of reference elements $\widehat \E \in \{\widehat \E_1,\ldots,\widehat \E_M\}$ under an affine mapping 
\begin{align}
F_\E(\widehat x) = a_\E + B_\E \widehat x \qquad \text{with } a_\E \in \RR^d, \ B_\E \in \RR^{d \times d}.
\end{align}
A partition $\T_h$ of $\Omega$ satisfying the above conditions will simply be called a \emph{mesh} in the sequel.
We denote by $h_\E$ the diameter of the element $\E$ and call $h=\max_\E h_\E$ the global mesh size.
To have some control on the shape of the elements, we further require that the mesh $\T_h$ is uniformly shape regular, i.e., there exists $c_\gamma>0$ such that  
\begin{align}
   \|B_\E\| \le c_\gamma h_T \qquad \text{and} \qquad \|B_\E^{-1}\| \le c_\gamma h_\E^{-1},
\end{align}
where $h_\E$ is the diameter of the element $\E$. Without loss of generality, one may assume that $c_\gamma$ is chosen such that  $c_\gamma^{-1} h_\E^{d} \le |\det B_\E| \le c_\gamma h_\E^d$ holds as well.

Using standard convention, we denote by $W^{k,p}(\T_h) = \{v \in L^2(\Omega) : v|_\E \in W^{k,p}(\E)\}$ the broken Sobolev spaces over the mesh $\T_h$ and we use $H^k(\T_h)=W^{k,2}(\T_h)$ for abbreviation. 

\subsection{Approximation spaces}

Let $P_k(\T_h) = \{v \in L^2(\Omega): v|_T \in P_k(\E)\}$ be the space of piecewise polynomials over the mesh $\T_h$
of degree smaller or equal to $k$. 
We assume that the approximation spaces $\V_h$, $Q_h$ in the discrete variational problem \eqref{eq:var1h}--\eqref{eq:var2h} satisfy 
\begin{itemize}
 \item[(A1)] $\V_h \subset P_k(\T_h)^d \cap H(\div,\Omega)$ and $Q_h = \div\,\V_h \supset P_0(\T_h)^d$.
\end{itemize}
We denote by $\pi_h : L^2(\Omega) \to Q_h$ the $L^2$-orthogonal projection onto $Q_h$, defined by
\begin{align}
 (\pi_h p,q_h) = (p,q_h) \qquad \forall q_h \in Q_h,
\end{align}
and use $\pi_h^k : L^2(\Omega) \to P_k(\T_h)$ for the orthogonal projections onto $P_k(\T_h)$ and $P_k(\T_h)^d$.
We further assume to have access to a suitable interpolation operator $\Pi_h$ for the velocity space with the following properties.
\begin{itemize}
 \item[(A2)]  $\Pi_h: H(\div,\Omega) \cap L^p(\Omega)^d \to\V_h$, $p>2$ is a linear and bounded projection operator
satisfying the \emph{commuting diagram} property 
\begin{align}\label{eq:commute}
	\div\,\Pi_h\v = \pi_h \div\,\v \qquad\text{for all } \v \in H(\div,\Omega) \cap L^p(\Omega)^d.
\end{align}
\end{itemize}
These assumptions will be used to establish the well-posedness of the discrete variational problem \eqref{eq:var1h}--\eqref{eq:var2h}. 
In particular, they yield the following discrete variant of Lemma~\ref{lem:infsup}.
\begin{lemma} \label{lem:infsuph}
Let (A1)--(A2) hold. Then for any $q_h \in Q_h$ there exists a function $\v_h \in\V_h$ such that $q_h=\div\,\v_h$ 
and $\|\v_h\|_{H(\div)} \le C \|q_h\|_{L^2(\Omega)}$. 
\end{lemma}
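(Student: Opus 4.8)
The plan is to obtain $\v_h$ as the interpolant of a continuous lifting of $q_h$, using the continuous surjectivity of the divergence from Lemma~\ref{lem:infsup} to produce the lifting and the commuting diagram property (A2) to transfer the divergence constraint to the discrete level. First I would apply the \emph{second} assertion of Lemma~\ref{lem:infsup} to the given $q_h \in Q_h \subset L^2(\Omega)$. This yields a function $\v \in H(\div,\Omega) \cap L^p(\Omega)^d$ with some $p>2$ satisfying $\div\,\v = q_h$ together with the stability bound
\[
\|\v\|_{H(\div,\Omega)} + \|\v\|_{L^p(\Omega)} \le C \|q_h\|_{L^2(\Omega)}.
\]
The extra integrability $p>2$ is not merely a technical convenience: it is precisely what makes the next step legitimate.

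Next I would set $\v_h := \Pi_h \v$. By (A2) the operator $\Pi_h$ is defined exactly on $H(\div,\Omega)\cap L^p(\Omega)^d$, so $\v_h \in \V_h$ is well defined, and the commuting diagram property \eqref{eq:commute} gives
\[
\div\,\v_h = \div\,\Pi_h \v = \pi_h \div\,\v = \pi_h q_h.
\]
Since $q_h$ already lies in $Q_h$ and $\pi_h$ is the $L^2$-orthogonal projection onto $Q_h$, the idempotence $\pi_h q_h = q_h$ yields $\div\,\v_h = q_h$, as required. For the norm estimate I would then combine the boundedness of $\Pi_h$ asserted in (A2) with the stability from Lemma~\ref{lem:infsup}: boundedness on the graph norm of $H(\div,\Omega)\cap L^p(\Omega)^d$ gives $\|\v_h\|_{H(\div,\Omega)} \le C\big(\|\v\|_{H(\div,\Omega)} + \|\v\|_{L^p(\Omega)}\big) \le C\|q_h\|_{L^2(\Omega)}$. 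As a consistency check, the divergence part is in fact exact, $\|\div\,\v_h\|_{L^2(\Omega)} = \|q_h\|_{L^2(\Omega)}$.

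The only genuine subtlety, and the point I would emphasize, is the mismatch of mapping properties: $\Pi_h$ is in general \emph{not} bounded on all of $H(\div,\Omega)$, but only on the smaller space $H(\div,\Omega)\cap L^p(\Omega)^d$ with $p>2$. The whole argument therefore hinges on the fact that Lemma~\ref{lem:infsup} delivers a lifting with this additional integrability; without it, the interpolant $\Pi_h\v$ need not even be defined. Everything else is a routine chaining of the continuous surjectivity, the commuting property, and the projection identity $\pi_h q_h = q_h$ on $Q_h$, so I expect no further obstacles beyond keeping track of this regularity requirement.
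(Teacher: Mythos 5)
Your proof is correct and follows essentially the same route as the paper: lift $q_h$ via the second assertion of Lemma~\ref{lem:infsup} to get $\v \in H(\div,\Omega)\cap L^p(\Omega)^d$, then take $\v_h = \Pi_h\v$ and conclude with the commuting diagram property and boundedness of $\Pi_h$ from (A2). Your emphasis on the $L^p$ integrability being exactly what makes $\Pi_h\v$ well defined is the same point the paper makes when motivating the second assertion of Lemma~\ref{lem:infsup}.
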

\begin{proof}
By Lemma~\ref{lem:infsup}, there exists a function $v \in H(\div,\Omega) \cap L^p(\Omega)^d$ such that $\div v=q_h$ and 
$\|\div v\| + \|v\|_{L^p} \le C \|q_h\|$. Then the choice $v_h = \Pi_h v$ together with the properties in assumption (A2) 
immediately yield the result. 
\end{proof}

A variety of particular approximation spaces $\V_h$, $Q_h$ and corresponding interpolation and projection operators $\Pi_h$, $\pi_h$ that satisfy assumptions (A1)--(A2) can be found in the literature \cite{BDDF87,BDFM87,BrezziDouglasMarini85,RaviartThomas87}; we refer to \cite{BoffiBrezziFortin13} and to Section~\ref{sec:2d} for details and examples. 

\subsection{Quadrature rule}

As a final ingredient for the formulation of our method, we need to specify the type of quadrature rule to be used in the definition of the discrete variational problem \eqref{eq:var1h}--\eqref{eq:var2h}.
For any $\u_h,\v_h \in V_h$, let us define
\begin{align}\label{eq:quadrule}
\begin{split}
 (\u_h,\v_h)_h 
 &\coloneqq \sum_{\E \in\T_h} (\u_h,\v_h)_{h,\E}
 =\sum_{\E \in\T_h}|\E|\sum_{n=1}^{N_\E} (\u_h \cdot  \v_h)( x_{n,\E}) w_{n,\E},
\end{split}
\end{align}
where $(x_{n,\E},w_{n,\E})$, $n=1,\ldots,N_\E$ denotes the integration points and weights specifying the local quadrature rules on the element $\E$. 
Note that the definition also naturally applies to scalar valued functions.
In order to ensure the well-posedness of the discrete variational problem \eqref{eq:var1h}--\eqref{eq:var2h} defined below, we require that 
\begin{itemize}
 \item[(A3)] all weights $\hat w_{n,\E}$ are positive and the lumped scalar product $(\cdot,\cdot)_h$ is uniformly equivalent  to the exact scalar product $(\cdot,\cdot)$ on the discrete velocity space $\V_h$, i.e., 
\begin{align} \label{eq:equiv1}
   c^{-1} (\v_h,\v_h) \le (\v_h,\v_h)_{h} \le c \;(\v_h,\v_h),
\end{align}
for all $\v_h \in \V_h$ with some generic constant $c>0$ independent of $\V_h$.
\end{itemize}
If $\K$ is bounded from above and below, and piecewise continuous, one can even show that 
\begin{align} \label{eq:equiv2}
   \hat c^{-1} (\v_h,\v_h) \le (\K^{-1}\v_h,\v_h)_{h} \le \hat c \;(\v_h,\v_h), 
\end{align}
for all discrete functions $\v_h \in \V_h$, which will be utilized below. In the sequel, we always assume that $\K$ is piecewise continuous.

\subsection{Discrete variational problem}

We next show that the system \eqref{eq:var1h}-\eqref{eq:var2h} attains a unique solution. 
For later reference, we here consider a slightly more general problem, namely: Find $(\w_h,r_h) \in \V_h\times Q_h$ such that 
\begin{alignat}{2}
(\Kmo \w_h,\v_h)_h - (r_h,\div\,\v_h) &= a_h(\v_h) \qquad &&\forall\, \v_h \in \V_h, \label{eq:var1a} \\
(\div\,\w_h,q_h) &= b_h(q_h) \qquad &&\forall\,q_h \in Q_h, \label{eq:var2a}
\end{alignat}
where $a_h : \V_h \to \RR$ and $b_h : Q_h \to \RR$ are some given linear functionals.
Standard arguments for the analysis of mixed variational problems yield the following result.
\begin{lemma} \label{lem:wellposeda}
Let (A1)--(A3) hold and let $a_h:\V_h \to \RR$ and $b_h:Q_h \to \RR$ be bounded linear functionals. 
Then problem \eqref{eq:var1a}--\eqref{eq:var2a} has a unique solution
$\w_h \in V_h$, $r_h \in Q_h$ 
with
\begin{align} 
\|\w_h\|_{H(\div)} + \|r_h\|_{L^2(\Omega)} \le 
C \left( \sup_{\v_h \in \V_h} \frac{a_h(\v_h)}{\|\v_h\|_{H(\div)}} +
\sup_{q_h \in Q_h} \frac{b_h(q_h)}{\|q_h\|_{L^2(\Omega)}}\right) 
\end{align}
with a constant $C$ that only depends on the constants in the assumptions. 
\end{lemma}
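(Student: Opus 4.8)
The plan is to cast problem \eqref{eq:var1a}--\eqref{eq:var2a} as an abstract saddle-point problem and apply Brezzi's splitting theorem \cite{BoffiBrezziFortin13,Brezzi74}. The two relevant bilinear forms are the lumped form $\mathsf{a}(\w_h,\v_h) = (\Kmo\w_h,\v_h)_h$ on $\V_h \times \V_h$ and the coupling form $\mathsf{b}(\v_h,q_h) = (\div\,\v_h,q_h)$ on $\V_h \times Q_h$ (to be carefully distinguished from the data functionals $a_h$, $b_h$ on the right-hand side). Brezzi's theorem then delivers a unique solution together with the claimed stability bound, provided that (i) both forms are bounded, (ii) $\mathsf{b}$ satisfies a uniform inf-sup condition, and (iii) $\mathsf{a}$ is coercive on the kernel $\V_h^0 = \{\v_h \in \V_h : \mathsf{b}(\v_h,q_h)=0 \ \forall q_h \in Q_h\}$. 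Since the constants produced by the theorem depend only on the boundedness, inf-sup, and coercivity constants, which in turn trace back only to the constants in (A1)--(A3), this yields exactly the asserted dependence of $C$.

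Boundedness is immediate: $\mathsf{b}$ satisfies $|\mathsf{b}(\v_h,q_h)| \le \|\div\,\v_h\|_{L^2(\Omega)}\|q_h\|_{L^2(\Omega)} \le \|\v_h\|_{H(\div)}\|q_h\|_{L^2(\Omega)}$, while boundedness of $\mathsf{a}$ follows from the upper estimate in \eqref{eq:equiv2}. For the inf-sup condition I would invoke Lemma~\ref{lem:infsuph} directly: given $q_h \in Q_h$, choose $\v_h \in \V_h$ with $\div\,\v_h = q_h$ and $\|\v_h\|_{H(\div)} \le C\|q_h\|_{L^2(\Omega)}$. Then $\mathsf{b}(\v_h,q_h) = \|q_h\|_{L^2(\Omega)}^2$, so that the supremum of $\mathsf{b}(\v_h,q_h)/\|\v_h\|_{H(\div)}$ over $\V_h$ is bounded below by $C^{-1}\|q_h\|_{L^2(\Omega)}$, which is the required uniform inf-sup estimate.

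The decisive point is the kernel coercivity. Here assumption (A1), namely $Q_h = \div\,\V_h$, implies that the kernel consists precisely of the divergence-free discrete fields, $\V_h^0 = \{\v_h \in \V_h : \div\,\v_h = 0\}$. On this subspace the divergence contribution to the graph norm vanishes, so $\|\v_h\|_{H(\div)}^2 = \|\v_h\|_{L^2(\Omega)}^2$, and the lower bound in \eqref{eq:equiv2} then gives $\mathsf{a}(\v_h,\v_h) = (\Kmo\v_h,\v_h)_h \ge \hat c^{-1}\|\v_h\|_{L^2(\Omega)}^2 = \hat c^{-1}\|\v_h\|_{H(\div)}^2$ for all $\v_h \in \V_h^0$, i.e.\ the full coercivity required by Brezzi's theorem.

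The main obstacle I anticipate is precisely the mismatch between the lumped form $\mathsf{a}$ and the full $H(\div)$ norm: the form $(\Kmo\cdot,\cdot)_h$ controls only the $L^2$ part of the norm and is \emph{not} coercive on all of $\V_h$ with respect to $\|\cdot\|_{H(\div)}$. The resolution is to exploit that coercivity is needed only on the kernel, where the divergence vanishes and the two norms coincide; it is exactly assumption (A1) that makes the kernel divergence-free, and the spectral equivalence \eqref{eq:equiv2} that transfers positivity from the exact to the lumped product. Once these two structural facts are in place, the verification of the Brezzi hypotheses is routine and the quoted stability estimate follows directly.
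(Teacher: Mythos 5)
Your proposal is correct and follows essentially the same route as the paper: both verify the hypotheses of Brezzi's splitting theorem, with boundedness from (A3) and the bounds on $\K$, the discrete inf-sup condition taken directly from Lemma~\ref{lem:infsuph}, and kernel coercivity obtained by combining (A1) (which forces $\div\,\v_h=0$ on the kernel, so that the $H(\div)$ and $L^2$ norms coincide there) with the lower spectral bound in \eqref{eq:equiv2}. The only cosmetic difference is that for boundedness of the lumped form you cite the quadratic estimate \eqref{eq:equiv2} (which needs a Cauchy--Schwarz step in the lumped inner product, valid since the weights are positive), whereas the paper writes the constant $\overline{k}c$ directly; this is immaterial.
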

\begin{proof}
It suffices to establish the conditions of Brezzi's splitting theorem \cite{Brezzi74}.

\noindent
(i) One has $(\div\,\v_h,q_h) \le \|\v_h\|_{H(\div)}
\|q_h\|_{L^2(\Omega)}$ and 
\begin{align*}
	(\Kmo\w_h,\v_h)_h \le \overline k c 
\|\w_h\|_{H(\div)} \|\v_h\|_{H(\div)},\qquad
\forall\,\w_h,\v_h\in\V_h
\end{align*}
i.e., the bilinear forms of the discrete variational problem \eqref{eq:var1a}-\eqref{eq:var2a} are uniformly continuous on the spaces $\V_h \times Q_h$ and $\V_h \times \V_h$, respectively.

\noindent
(ii) Lemma~\ref{lem:infsuph} implies the discrete inf-sup condition
\begin{align*}
\sup_{\v_h \in \V_h} \frac{(\div\,\v_h,q_h)}{\|\v_h\|_{H(\div)}} \ge \beta \|q_h\|_{L^2(\Omega)}, \qquad \text{for all } q_h \in Q_h,
\end{align*}
with $\beta> 0$ depending only on the constant from Lemma~\ref{lem:infsuph}.

\noindent
(iii) By assumption (A1), we know that $\div\,\V_h = Q_h$. 
The condition $(\div\,\v_h,q_h)=0$ for all functions $q_h$ therefore implies that $\div\,\v_h = 0$.  
From (A3), we know that 
\begin{align*}
(\Kmo \v_h,\v_h)_h
&\ge \underline k (\v_h,\v_h)_h
 \ge \underline k c^{-1} (\v_h,\v_h)
 = \underline k c^{-1} \|\v_h\|_{L^2(\Omega)}^2,
 \qquad\forall\,\v_h \in \V_h
\end{align*}
Together with the above considerations, we obtain
\begin{align*}
(\Kmo \v_h,\v_h)_h
&\ge \alpha \|\v_h\|_{L^2(\Omega)}^2 
 \ge \alpha \|\v_h\|_{H(\div)}^2,
\end{align*}
for all $\v_h \in \V_h$ satisfying $(\div\,\v_h,q_h)=0$ for all $q_h \in Q_h$ and with $\alpha =\underline{k}c^{-1}>0$. This is the discrete kernel ellipticity condition required for Brezzi's splitting theorem. The assertions of the Lemma are then a direct consequence of the results in \cite{Brezzi74}; see \cite{BoffiBrezziFortin13} for further details.
\end{proof}

From the previous Lemma one can deduce that for any $f \in L^2(\Omega)$ and $g \in H^{1/2}(\partial\Omega)$, the discrete variational problem \eqref{eq:var1h}--\eqref{eq:var2h} admits a unique solution $(\u_h,p_h) \in \V_h\times Q_h$. 

\section{First order estimates} \label{sec:first}

Based on assumptions (A1)--(A3), we now present an abstract error analysis for the discrete variational problem \eqref{eq:var1h}--\eqref{eq:var2h}. 
We start with splitting the error
\begin{align}
\|\u - \u_h\|_{L^2(\Omega)} &\le \|\u - \Pi_h \u\|_{L^2(\Omega)} + \|\Pi_h \u - \u_h\|_{L^2(\Omega)},\label{eq:split1} \\
\|p - p_h\|_{L^2(\Omega)} &\le \|p - \pi_h p\|_{L^2(\Omega)} + \|\pi_h p - p_h\|_{L^2(\Omega)},\label{eq:split2}
\end{align}
into approximation errors and discrete error components. 
Some additional assumptions will be required to bound the approximation errors and the consistency errors introduced by the numerical quadrature. 
In this section, we establish convergence rates of first order; the improved estimates \eqref{eq:newrate1}--\eqref{eq:newrate2} announced in the introduction will be proved in the next section.

\subsection{Approximation error}

In order to ensure sufficient approximation properties of the spaces
$\V_h$ and $Q_h$, we assume that 
\begin{itemize}
 \item[(A4)]  the projection operators $\Pi_h$ and $\pi_h$ are defined locally, i.e., $(\Pi_h\v)|_\E = \Pi_\E\v|_\E$ and $(\pi_h q)|_\E = \pi_\E q|_\E$ for appropriate operators $\Pi_\E$ and $\pi_\E$ which satisfy
 \begin{alignat}{2}
   \|\Pi_\E\u -\u\|_{L^2(\E)} &\le C h_\E \|\nabla\u\|_{L^2(\E)} \qquad &&\text{for }\u \in H^1(\E)^d,\\
   \|\pi_\E p - p\|_{L^2(\E)} &\le C h_\E \|\nabla p\|_{L^2(\E)} \qquad &&\text{for } p \in H^1(\E),
 \end{alignat}
 for all elements $\E \in\T_h$ with a uniform constant $C$.
\end{itemize}
This condition already allows to bound the approximation errors in the above error splitting. In addition, we can show that the projection $\Pi_h$ is actually stable in $H^1(\Omega)$. 

\begin{lemma} \label{lem:h1stability} 
Let (A1)--(A4) hold. Then, for all $u \in H^1(\E)$ there holds
\begin{align}
\|\Pi_\E \u\|_{H^1(\E)} \le C \|\u\|_{H^1(\E)} 
\end{align}
with a constant $C$ that is independent of the element $\E$ and of the function $u$.
\end{lemma}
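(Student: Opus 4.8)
The plan is to exploit a structural consequence of assumption (A4): the operator $\Pi_\E$ reproduces constants, which is exactly what removes the spurious factor $h_\E^{-1}$ that a naive scaling argument would otherwise produce. Indeed, applying the estimate in (A4) to an arbitrary constant vector field $c$ gives $\|\Pi_\E c - c\|_{L^2(\E)} \le C h_\E \|\nabla c\|_{L^2(\E)} = 0$, so that $\Pi_\E c = c$. This identity is the crux of the whole argument.

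Fix $\u \in H^1(\E)^d$ (which lies in $L^p(\E)^d$ with $p>2$ by Sobolev embedding, so that $\Pi_\E \u$ is well defined), and let $\bar\u$ denote its mean value over $\E$. By linearity of $\Pi_\E$ and constant reproduction we have $\nabla \Pi_\E \u = \nabla \Pi_\E(\u - \bar\u)$, since $\nabla \bar\u = 0$. I would then bound the right-hand side by combining three standard ingredients. Since $\Pi_\E(\u-\bar\u)$ is a piecewise polynomial by (A1), an inverse inequality on a shape-regular element yields $\|\nabla \Pi_\E(\u-\bar\u)\|_{L^2(\E)} \le C h_\E^{-1}\|\Pi_\E(\u-\bar\u)\|_{L^2(\E)}$. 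The $L^2$-norm on the right is controlled, via (A4) and the triangle inequality, by $\|\u - \bar\u\|_{L^2(\E)} + C h_\E \|\nabla \u\|_{L^2(\E)}$, and the first term is in turn bounded by $C h_\E \|\nabla\u\|_{L^2(\E)}$ through Poincaré's inequality. The factors $h_\E^{-1}$ and $h_\E$ cancel, leaving $\|\nabla\Pi_\E\u\|_{L^2(\E)} \le C\|\nabla\u\|_{L^2(\E)}$. The zero-order part $\|\Pi_\E\u\|_{L^2(\E)} \le \|\u\|_{L^2(\E)} + Ch_\E\|\nabla\u\|_{L^2(\E)} \le C\|\u\|_{H^1(\E)}$ follows directly from (A4) together with $h_\E \le \operatorname{diam}(\Omega)$, and adding the two contributions gives the claim.

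The point that requires real care is the uniformity of all constants in $\E$, and this is where I expect the only genuine difficulty to lie; the cancellation of the $h_\E$ powers, by contrast, is routine once constant reproduction is in hand. Here the mesh assumptions enter decisively: every element is the affine image $\E = F_\E(\widehat\E)$ of one of finitely many reference elements $\widehat\E$, and the maps satisfy the shape-regularity bounds $\|B_\E\|\le c_\gamma h_\E$ and $\|B_\E^{-1}\| \le c_\gamma h_\E^{-1}$. Consequently the inverse-inequality constant and the Poincaré constant on $\E$ can be obtained by pulling back to $\widehat\E$, using the finite-dimensionality of $P_k(\widehat\E)^d$ and the equivalence of all norms there, and scaling back; taking the maximum over the finite reference set and absorbing the powers of $c_\gamma$ produces a single constant $C$ independent of $\E$ and of $\u$, as required.
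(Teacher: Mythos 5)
Your proof is correct and takes essentially the same route as the paper's: an inverse inequality applied to a polynomial that differs from $\Pi_\E \u$ by a constant, combined with the first-order approximation estimate of (A4) and the Poincar\'e inequality, so that the factors $h_\E^{-1}$ and $h_\E$ cancel, with uniformity of constants coming from shape regularity and the finite set of reference elements. The only cosmetic difference is that you subtract the mean value inside the projection (justified by the constant-reproduction property you extract from (A4)), whereas the paper subtracts the piecewise-constant projection $\pi_\E^0 \u$ outside $\Pi_\E$, whose gradient vanishes trivially; by constant reproduction these yield the same polynomial, so the two arguments coincide.
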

\begin{proof}
By the triangle inequality and Assumption~(A4), we obtain 
\begin{align*} 
\|\Pi_T u\|_{L^2(T)} \le \|\Pi_T u - u\|_{L^2(T)} + \|u\|_{L^2(T)} \le  (C h_T + 1) \|u\|_{H^1(T)}.
\end{align*}
Now let $\pi_\E^0 : L^2(\E)^d \to\P_0(\E)^d$ be the $L^2$-orthogonal projection. Then 
\begin{align*}
\|\nabla \Pi_\E \u\|_{L^2(\E)} 
&= \|\nabla \Pi_\E \u - \nabla \pi_\E^0 u\|_{L^2(\E)}
\le h_T^{-1}\|\Pi_\E \u - \pi_\E^0u\|_{L^2(\E)} 
\le 2 C \|\nabla u\|_{L^2(T)},
\end{align*}
where we used the triangle inequality and the approximation properties of the two 
projection operators. Combination of the two estimates yields the assertion.
\end{proof}

\subsection{Quadrature error}

As a second ingredient for our convergence analysis, we require that the numerical quadrature is sufficiently accurate. In particular, we assume that 
\begin{itemize}
 \item[(A5)] the quadrature rule is exact on $P_0(\T_h)^d\times\V_h$, i.e., 
\begin{align}
\sigma_h(\w_h,\v_h):=(\w_h,\v_h)_h - (\w_h,\v_h) = 0 
\qquad \text{for all } \w_h \in P_0(\T_h)^d,\,\v_h \in \V_h.
\end{align}
\end{itemize}
As a direct consequence of (A5), we then obtain the following estimates.
\begin{lemma} \label{lem:quaderror1}
Let assumptions (A1)--(A5) hold and further assume that $\K \in W^{1,\infty}(\T_h)$. Then for any 
$\u \in H(\div,\Omega) \cap H^1(\T_h)^d$ and any $\v_h \in\V_h$, there holds 
\begin{align}
|\sigma_h(\Kmo \Pi_h\u,\v_h)| \le C h \|\Kmo\|_{W^{1,\infty}(\T_h)} 
\|\u\|_{H^1(\T_h)}\|\v_h\|_{L^2(\Omega)}.  
\end{align}
\end{lemma}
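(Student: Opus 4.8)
The plan is to localize the quadrature error to individual elements and to exploit the exactness property (A5) by subtracting, on each element, a suitable constant. Since the quadrature rule in \eqref{eq:quadrule} and the exact $L^2$--product are both additive over the elements, I write $\sigma_h(\Kmo \Pi_h\u,\v_h) = \sum_{\E\in\T_h}\sigma_{h,\E}(\Kmo \Pi_h\u,\v_h)$, where $\sigma_{h,\E}(\phi,\v) := (\phi,\v)_{h,\E} - (\phi,\v)_{\E}$ and $(\cdot,\cdot)_\E$ is the exact $L^2(\E)$--product. Testing (A5) with functions supported on a single element shows the local version: $\sigma_{h,\E}(\phi,\v_h)=0$ whenever $\phi$ is constant on $\E$ and $\v_h\in\V_h$. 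As a preliminary step I would record the generic local bound
\begin{align*}
|\sigma_{h,\E}(\phi,\v_h)| \le C\,|\E|^{1/2}\,\|\phi\|_{L^\infty(\E)}\,\|\v_h\|_{L^2(\E)},
\end{align*}
valid for any $\phi$ continuous on $\E$, which follows by applying Cauchy--Schwarz separately to the exact product and to the quadrature sum, and then invoking the positivity of the weights together with the equivalence (A3), the latter also providing the uniform bound on $\sum_n w_{n,\E}$ needed to estimate $(\phi,\phi)_{h,\E}$ by $\|\phi\|_{L^\infty(\E)}^2|\E|$.

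The essential point is the choice of the constant to subtract. Because (A5) only removes a first argument that is genuinely constant, I cannot merely freeze the coefficient $\Kmo$; instead I subtract the elementwise mean of the \emph{full} product, setting $\phi_\E := \Kmo \Pi_\E\u - \pi_\E^0(\Kmo \Pi_\E\u)$, so that $\sigma_{h,\E}(\Kmo \Pi_\E\u,\v_h) = \sigma_{h,\E}(\phi_\E,\v_h)$. It then remains to bound $\|\phi_\E\|_{L^\infty(\E)}$. The oscillation of a Lipschitz function about its mean gives
\begin{align*}
\|\phi_\E\|_{L^\infty(\E)} \le C\,h_\E\,\|\nabla(\Kmo \Pi_\E\u)\|_{L^\infty(\E)} \le C\,h_\E\,\|\Kmo\|_{W^{1,\infty}(\E)}\bigl(\|\Pi_\E\u\|_{L^\infty(\E)} + \|\nabla\Pi_\E\u\|_{L^\infty(\E)}\bigr).
\end{align*}
Since $\Pi_\E\u$ is a polynomial on $\E$, inverse inequalities let me pass to $L^2$--norms, $\|\Pi_\E\u\|_{L^\infty(\E)} + \|\nabla\Pi_\E\u\|_{L^\infty(\E)} \le C\,|\E|^{-1/2}\|\Pi_\E\u\|_{H^1(\E)}$, and the $H^1$--stability of the interpolant (Lemma~\ref{lem:h1stability}) yields $\|\Pi_\E\u\|_{H^1(\E)}\le C\|\u\|_{H^1(\E)}$. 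Inserting this into the generic local bound gives $|\sigma_{h,\E}(\Kmo \Pi_\E\u,\v_h)| \le C\,h_\E\,\|\Kmo\|_{W^{1,\infty}(\E)}\,\|\u\|_{H^1(\E)}\,\|\v_h\|_{L^2(\E)}$.

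Finally I would sum over all elements and apply Cauchy--Schwarz in $\E$, using $h_\E\le h$ and $\|\Kmo\|_{W^{1,\infty}(\E)}\le\|\Kmo\|_{W^{1,\infty}(\T_h)}$, to obtain
\begin{align*}
|\sigma_h(\Kmo \Pi_h\u,\v_h)| \le C\,h\,\|\Kmo\|_{W^{1,\infty}(\T_h)}\Bigl(\sum_\E\|\u\|_{H^1(\E)}^2\Bigr)^{1/2}\Bigl(\sum_\E\|\v_h\|_{L^2(\E)}^2\Bigr)^{1/2},
\end{align*}
whose two factors equal $\|\u\|_{H^1(\T_h)}$ and $\|\v_h\|_{L^2(\Omega)}$. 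The step I expect to be the main obstacle is the correct use of (A5): since exactness holds only for a piecewise constant first argument, I must subtract the mean of the entire product $\Kmo \Pi_\E\u$ rather than freezing the coefficient alone, and then control its oscillation. Because the quadrature involves point evaluations, this forces a detour through $L^\infty$ norms, and the delicate bookkeeping is to track the powers of $h_\E$ and $|\E|^{1/2}$ through the inverse inequalities and the $H^1$--stability of $\Pi_\E$ so that they collapse into the single factor $h_\E$ asserted in the statement.
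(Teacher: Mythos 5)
Your proof is correct and follows essentially the same route as the paper's: localize the quadrature error, use (A5) to subtract the elementwise mean $\pi_\E^0(\Kmo\Pi_\E\u)$ of the \emph{full} product, bound the resulting oscillation by $h_\E\|\nabla(\Kmo\Pi_h\u)\|_{L^\infty(\E)}$ via the product rule, and return to $L^2$ norms through inverse/scaling inequalities together with the $H^1$-stability of $\Pi_h$ (Lemma~\ref{lem:h1stability}). The only difference is bookkeeping: you keep $\v_h$ in $L^2(\E)$ throughout via a discrete Cauchy--Schwarz on the quadrature sum, whereas the paper passes both factors through $L^\infty$ and rescales with $h_\E^{d/2}\|w_h\|_{L^\infty(\E)}\le c\|w_h\|_{L^2(\E)}$ at the end.
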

\begin{proof}
Using Assumption~(A5) and \eqref{eq:equiv2}, one can see that 
\begin{align*}
\sigma_T( \Kmo \Pi_h\u,\v_h) 
&= \sigma_T(\Kmo \Pi_h u - \pi_h^0 (\Kmo \Pi_h u),\v_h) \\ 
&\le c h_T^d \|\Kmo \Pi_h u - \pi_h^0 (\Kmo \Pi_h u)\|_{L^\infty(T)} \|\v_h\|_{L^\infty(\Omega)} \\
&\le C  h_T^{d+1} \|\nabla (\Kmo \Pi_h u)\|_{L^\infty(T)} \|\v_h\|_{L^\infty(\Omega)}.
\end{align*}
Using the product rule of differentiation, we can further estimate
\begin{align*}
\|\nabla (\Kmo \Pi_h u)\|_{L^\infty(T)} \le \|\nabla \Kmo\|_{L^\infty(T)} \|\Pi_h u\|_{L^\infty(T)} + 
\|\Kmo\|_{L^\infty(T)} \|\nabla \Pi_h u\|_{L^\infty(T)}. 
\end{align*}
Using $h_T^{d/2}\|w_h\|_{L^\infty(T)} \le c \|w_h\|_{L^2(T)}$ for $w_h=v_h,\Pi_h u$, which follows from a scaling argument, 
the regularity of $\Kmo$, and summing over all elements then yields the result.
\end{proof}

\subsection{First order estimates}

In the sequel, let $(\u,p)$ be a sufficiently smooth solution of problem \eqref{eq:sys1}--\eqref{eq:sys3} and 
$(\u_h,p_h)$ solve the discrete variational problem \eqref{eq:var1h}--\eqref{eq:var2h}.
Then, by combination of the previous results, we already obtain the following assertion.
\begin{theorem} \label{thm:err1}
Let (A1)--(A5) hold and $\K \in W^{1,\infty}(\T_h)$. 
Then 
\begin{align}
\|\u -\u_h\|_{L^2(\Omega)} + \|p - p_h\|_{L^2(\Omega)} 
\le C h \big( \|\u\|_{H^1(\T_h)} + \|p\|_{H^1(\T_h)}). 
\end{align}
If, additionally, $\div\,\u\in H^1(\T_h)$, then also 
$\|\div (\u - \u_h)\|_{L^2(\Omega)} \le C h \|\div\,\u\|_{H^1(\T_h)}$.
\end{theorem}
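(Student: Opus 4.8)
The plan is to run the standard mixed-method error argument on the two splittings \eqref{eq:split1}--\eqref{eq:split2}, bounding the approximation parts $\|\u-\Pi_h\u\|_{L^2(\Omega)}$ and $\|p-\pi_h p\|_{L^2(\Omega)}$ directly with (A4), and concentrating on the discrete error components $e_u := \Pi_h\u - \u_h$ and $e_p := \pi_h p - p_h$. The first step is to subtract the discrete problem \eqref{eq:var1h}--\eqref{eq:var2h} from the continuous problem \eqref{eq:var1}--\eqref{eq:var2} restricted to discrete test functions; inserting $\Pi_h\u$ and splitting the mismatch into a quadrature part and an approximation part gives, for all $\v_h\in\V_h$,
\[
(\Kmo e_u,\v_h)_h = (p-p_h,\div\,\v_h) + \sigma_h(\Kmo\Pi_h\u,\v_h) + (\Kmo(\Pi_h\u-\u),\v_h),
\]
together with $(\div(\u-\u_h),q_h)=0$ for all $q_h\in Q_h$ from the second equation.

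The crucial observation I would make next is that the commuting-diagram property (A2) together with $Q_h=\div\,\V_h$ forces $\div\,e_u=0$: the function $\div e_u = \pi_h\div\u - \div\u_h$ lies in $Q_h$ and is at the same time $L^2$-orthogonal to $Q_h$, hence vanishes. Choosing $\v_h=e_u$ in the error equation then kills the pressure term $(p-p_h,\div e_u)$, and the coercivity bound \eqref{eq:equiv2} yields $\hat c^{-1}\|e_u\|_{L^2(\Omega)}^2 \le |\sigma_h(\Kmo\Pi_h\u,e_u)| + |(\Kmo(\Pi_h\u-\u),e_u)|$. I would estimate the first term by Lemma~\ref{lem:quaderror1} and the second by Cauchy--Schwarz, boundedness of $\Kmo$, and (A4); both are bounded by $Ch\|\u\|_{H^1(\T_h)}\|e_u\|_{L^2(\Omega)}$, so dividing gives $\|e_u\|_{L^2(\Omega)}\le Ch\|\u\|_{H^1(\T_h)}$, and the triangle inequality with (A4) closes the velocity estimate.

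For the pressure I would use the discrete inf-sup result, Lemma~\ref{lem:infsuph}, to select $\v_h\in\V_h$ with $\div\,\v_h=e_p$ and $\|\v_h\|_{H(\div)}\le C\|e_p\|_{L^2(\Omega)}$. Because $e_p\in Q_h$ and $p-\pi_h p$ is $L^2$-orthogonal to $Q_h$, the first term on the right becomes $(p-p_h,\div\,\v_h)=\|e_p\|_{L^2(\Omega)}^2$, so the error equation rearranges into $\|e_p\|_{L^2(\Omega)}^2 = (\Kmo e_u,\v_h)_h - \sigma_h(\Kmo\Pi_h\u,\v_h) - (\Kmo(\Pi_h\u-\u),\v_h)$. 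Bounding the right-hand side with the velocity estimate just obtained, Lemma~\ref{lem:quaderror1}, and (A4), every term carries a factor $\|\v_h\|_{L^2(\Omega)}\le C\|e_p\|_{L^2(\Omega)}$; dividing once delivers $\|e_p\|_{L^2(\Omega)}\le Ch\,\|\u\|_{H^1(\T_h)}$, and \eqref{eq:split2} together with (A4) completes the $L^2$ pressure bound. For the final divergence claim, $\div e_u=0$ means $\div\,\u_h=\pi_h\div\,\u$, so $\div(\u-\u_h)=(I-\pi_h)\div\,\u$, and (A4) applied to $\div\,\u\in H^1(\T_h)$ bounds this by $Ch\|\div\,\u\|_{H^1(\T_h)}$.

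I expect the only genuinely delicate point to be the bookkeeping in the error equation combined with the identity $\div e_u=0$; once that is recognised, the velocity and pressure estimates decouple cleanly and the rest is a routine assembly of the quadrature bound of Lemma~\ref{lem:quaderror1}, the coercivity and inf-sup estimates, and the approximation properties of (A4), all of which are already available.
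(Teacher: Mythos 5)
Your proposal is correct: the error equation you derive for $e_u=\Pi_h\u-\u_h$, $e_p=\pi_h p-p_h$ is exactly the one in the paper (the functional $a_h(\v_h)=(\Kmo(\Pi_h\u-\u),\v_h)+\sigma_h(\Kmo\Pi_h\u,\v_h)$ with vanishing right-hand side in the second equation), and your bounds via (A4) and Lemma~\ref{lem:quaderror1} match the paper's. The difference is how the discrete error is then controlled. The paper simply observes that $(e_u,e_p)$ solves the auxiliary problem \eqref{eq:var1a}--\eqref{eq:var2a} and invokes the abstract stability result Lemma~\ref{lem:wellposeda} (Brezzi's theorem for the lumped bilinear forms) as a black box, which gives both $\|e_u\|_{H(\div)}$ and $\|e_p\|_{L^2}$ in one stroke. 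You instead unfold that stability argument by hand: you prove $\div\,e_u=0$ from the commuting diagram and $Q_h=\div\,\V_h$, test with $\v_h=e_u$ to decouple the velocity bound via the coercivity \eqref{eq:equiv2}, and then recover the pressure bound from the discrete inf-sup selection of Lemma~\ref{lem:infsuph}. Both routes are valid; the paper's is shorter, while yours is self-contained and makes the mechanism visible --- indeed, your argument is precisely the template the paper itself deploys later for the improved estimates (the $\div\,\w_h=0$ trick and testing with $\w_h$ in Lemma~\ref{lem:err2a}, and the inf-sup selection of $\v_h^*$ in Lemma~\ref{lem:err2b}), so nothing in your write-up needs repair; only note, when bounding $(\Kmo e_u,\v_h)_h$ in the pressure step, that the discrete Cauchy--Schwarz inequality for the lumped product together with \eqref{eq:equiv2} is what yields the factor $\|e_u\|_{L^2(\Omega)}\|\v_h\|_{L^2(\Omega)}$.
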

\begin{proof}
In view of the error splitting \eqref{eq:split1}-\eqref{eq:split2} and assumption (A4), it remains to estimate the discrete error 
components $\w_h = \Pi_h\u -\u_h$ and $r_h = \pi_h p - p_h$. Using the discrete and continuous variational principles \eqref{eq:var1h}--\eqref{eq:var2h} and \eqref{eq:var1}--\eqref{eq:var2},
one can see that the discrete error components $(\w_h,r_h)$ satisfy the equations \eqref{eq:var1a}--\eqref{eq:var2a} with 
\begin{align*}
a_h(\v_h) = (\Kmo(\Pi_h\u -\u),\v_h) + \sigma_h(\Kmo \Pi_h\u,\v_h) 
\qquad \text{and} \qquad b_h(q_h) = 0.
\end{align*}
By assumption (A4) and Lemma~\ref{lem:quaderror1}, we immediately obtain 
\begin{align*}
a_h(\v_h) \le C h \big(\|\u\|_{H^1(\T_h)} + \|\Kmo\|_{W^{1,\infty}(\T_h)} 
\|\u\|_{H^1(\T_h)}\big)\|\v_h\|_{L^2(\Omega)}. 
\end{align*}
The first assertion then follows readily from Lemma~\ref{lem:wellposeda}, and the estimate of the divergence error
is a direct consequence of condition (A2) and the properties of the projection operators $\Pi_h$ and $\pi_h$ stated in assumption (A4).
\end{proof}

\begin{remark}
Let us note that the above theorem also covers the first order estimates derived in \cite{WheelerYotov06} for their 
choice of spaces and quadrature rules; cf Figure~\ref{fig:BDM}.  
\end{remark}

\section{Improved estimates}\label{sec:improved}

We now establish the improved estimates \eqref{eq:newrate1}--\eqref{eq:newrate2} that were announced in the introduction and which will be summarized in a theorem at the end of this section. 

\subsection{Estimates for the velocity}\label{subs:estvel}

For our analysis, we require some additional assumptions that will be introduced as needed.
First of all, we assume that the space $\V_h$ has sufficient approximation properties, i.e., 
\begin{itemize}
 \item[(A6)] the local projection operators $\Pi_\E$ introduced in (A4) satisfy
 \begin{alignat}{2}
   \|\Pi_\E\u -\u\|_{L^2(\E)} &\le C h_\E^2 \|\nabla^2\u\|_{L^2(\E)} \qquad &&\text{for } u \in H^2(\E)^d,
 \end{alignat}
  with a uniform constant $C$ for all elements $\E \in\T_h$.
\end{itemize}
In addition, we also require higher exactness of the quadrature rule together with an additional special assumption on the spaces. 
We thus assume that
\begin{itemize}
 \item[(A7)]the quadrature rule is exact on $P_1(\T_h)^d\times\V_h^*$, i.e.,               
\begin{align}
	\sigma_h(\w_h,\v_h):=(\w_h,\v_h^*)_h - (\w_h,\v_h^*) = 0
\end{align}
for all functions  $\w_h \in P_1(\T_h)^d$ and $\v_h^* \in\V_h^*$. Moreover, there exists $\V_h^* \subset\V_h$ such that 
 $\v_h \in\V_h$ and $\div\,\v_h \in Q_h^*:=\div\,\V_h^*$ imply $\v_h \in\V_h^*$. 
\end{itemize}
As a direct consequence, we thenobtain the following bounds for the quadrature error.
\begin{lemma} \label{lem:quaderror2}
Let (A1)--(A7) hold and further assume that 
$\K \in W^{2,\infty}(\T_h)$. Then for any 
$\u \in H(\div,\Omega) \cap H^1(\T_h)^d$ and any $\v_h^* \in \V_h^*$, there holds 
\begin{align}
|\sigma(\Kmo \Pi_h\u,\v_h^*)| \le C h^2 \|\Kmo\|_{W^{2,\infty}(\T_h)}
\|\u\|_{H^2(\T_h)} \|\v_h^*\|_{L^2(\Omega)}.  
\end{align}
\end{lemma}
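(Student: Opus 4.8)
The plan is to mimic the proof of Lemma~\ref{lem:quaderror1} but to exploit the higher quadrature exactness in (A7) together with the improved approximation property (A6). The key device is again a local subtraction of a polynomial that the quadrature integrates exactly: whereas in Lemma~\ref{lem:quaderror1} we subtracted the piecewise-constant projection $\pi_h^0(\Kmo\Pi_h\u)$ and used (A5), here I would subtract the piecewise-\emph{linear} projection $\pi_h^1(\Kmo\Pi_h\u)$ and invoke the exactness on $P_1(\T_h)^d\times\V_h^*$ granted by (A7). Concretely, since $\pi_h^1(\Kmo\Pi_h\u) \in P_1(\T_h)^d$, assumption (A7) gives $\sigma_\E(\pi_h^1(\Kmo\Pi_h\u),\v_h^*)=0$, so on each element
\begin{align*}
\sigma_\E(\Kmo\Pi_h\u,\v_h^*) = \sigma_\E\big(\Kmo\Pi_h\u - \pi_h^1(\Kmo\Pi_h\u),\v_h^*\big).
\end{align*}
Bounding the lumped and exact products crudely (as in Lemma~\ref{lem:quaderror1}, using positivity of weights from (A3) and the equivalence \eqref{eq:equiv1}) then yields a factor $c\,h_\E^d\,\|\Kmo\Pi_h\u - \pi_h^1(\Kmo\Pi_h\u)\|_{L^\infty(\E)}\|\v_h^*\|_{L^\infty(\E)}$.

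The crux is then to bound $\|\Kmo\Pi_h\u - \pi_h^1(\Kmo\Pi_h\u)\|_{L^\infty(\E)}$ by $C h_\E^2$. Since $\pi_h^1$ is the $L^2$-projection onto linears, the standard Bramble–Hilbert / scaling argument gives
\begin{align*}
\|w - \pi_\E^1 w\|_{L^\infty(\E)} \le C h_\E^2 \|\nabla^2 w\|_{L^\infty(\E)}
\end{align*}
for the smooth-enough function $w = \Kmo\Pi_h\u$. Applying the product rule twice to $\nabla^2(\Kmo\Pi_h\u)$ produces terms involving up to second derivatives of $\Kmo$ (controlled by $\|\Kmo\|_{W^{2,\infty}(\T_h)}$) paired with up to second derivatives of $\Pi_h\u$. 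The main obstacle I anticipate is bounding $\|\nabla^2\Pi_h\u\|_{L^\infty(\E)}$: naively $\Pi_h\u$ could carry two derivatives, but on each element $\Pi_\E\u$ is a fixed-degree polynomial, so by an inverse estimate its second derivatives are controlled by lower-order norms, and I would combine this with the $H^1$-stability of $\Pi_\E$ (Lemma~\ref{lem:h1stability}) and the approximation estimate (A6) to trade derivatives down to $\|\u\|_{H^2(\E)}$ at the cost of the correct powers of $h_\E$.

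To make the $L^\infty$ norms match the $L^2$-norms appearing in the final claim, I would convert everything back via the inverse-type scaling inequality $h_\E^{d/2}\|w_h\|_{L^\infty(\E)} \le c\,\|w_h\|_{L^2(\E)}$ (valid for polynomials on a shape-regular element, already used in Lemma~\ref{lem:quaderror1}), applied to $\v_h^*$ and to the derivatives of $\Pi_\E\u$. Carefully tracking the powers of $h_\E$ should leave the net factor $h_\E^2$ on each element. Summing over all $\E\in\T_h$ and using a Cauchy–Schwarz step over elements (together with the finite overlap/shape regularity so the $L^\infty$-to-$L^2$ conversions combine cleanly) then yields
\begin{align*}
|\sigma_h(\Kmo\Pi_h\u,\v_h^*)| \le C h^2 \|\Kmo\|_{W^{2,\infty}(\T_h)} \|\u\|_{H^2(\T_h)} \|\v_h^*\|_{L^2(\Omega)},
\end{align*}
which is the assertion. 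The delicate bookkeeping is entirely in matching derivative orders of $\Pi_\E\u$ against powers of $h_\E$ via the inverse estimate; the quadrature exactness (A7) and the subtraction of the linear projection are what upgrade the $h$ of Lemma~\ref{lem:quaderror1} to $h^2$ here.
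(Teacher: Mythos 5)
Your proposal is correct and follows essentially the same route as the paper: subtract the elementwise linear projection $\pi_\E^1(\Kmo\Pi_h\u)$, kill it with the exactness in (A7), and then repeat the $L^\infty$/scaling machinery of Lemma~\ref{lem:quaderror1} with one extra derivative. The paper compresses all of this into two lines (``the result then follows with the same arguments as used in the proof of Lemma~\ref{lem:quaderror1}''), whereas you additionally spell out the one genuinely new ingredient, namely controlling $\nabla^2\Pi_\E\u$ via inverse estimates together with (A6), which is exactly the $H^2$-stability of $\Pi_\E$ that the paper invokes later (in the proof of Lemma~\ref{lem:err2c}) with reference to Lemma~\ref{lem:h1stability}.
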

\begin{proof}
Since the quadrature rule is exact for functions in $\P_1(\T_h)^d\times V_h^*$, we can write
\begin{align*}
\sigma_\E(\Kmo \Pi_h\u,\v_h^*) 
&= \sigma_\E(\Kmo \Pi_h\u - \pi_\E^1 (\K^{-1} \Pi_h\u),\v_h^*).
\end{align*}
The result then follows with the same arguments as used in the proof of Lemma~\ref{lem:quaderror1}.
\end{proof}
We now already obtain the following improved error estimates for the velocity.
\begin{lemma} \label{lem:err2a}
Let (A1)--(A7) hold. Moreover, assume that
$\K \in W^{2,\infty}(\T_h)$. Then 
\begin{align}
\|\u -\u_h\|_{L^2(\Omega)} \le C h^2\|\u\|_{H^2(\T_h)}. 
\end{align}
Moreover, if $\div\,\u\in H^2(\T_h)$, we have 
$\|\div (\u -\u_h)\|_{L^2(\Omega)} \le C h^2 \|\div\,\u\|_{H^2(\T_h)}$.
\end{lemma}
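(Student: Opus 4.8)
The plan is to follow the error-splitting strategy of Theorem~\ref{thm:err1}, but to exploit the special subspace structure in (A7) to upgrade the quadrature estimate from first to second order. Writing $\w_h = \Pi_h\u - \u_h$, I split
\begin{align*}
\|\u - \u_h\|_{L^2(\Omega)} \le \|\u - \Pi_h\u\|_{L^2(\Omega)} + \|\w_h\|_{L^2(\Omega)},
\end{align*}
and bound the approximation error by $C h^2\|\u\|_{H^2(\T_h)}$ directly from (A6) after summing the elementwise estimates. As in the proof of Theorem~\ref{thm:err1}, the pair $(\w_h,r_h)$ with $r_h = \pi_h p - p_h$ solves \eqref{eq:var1a}--\eqref{eq:var2a} with $b_h = 0$ and $a_h(\v_h) = (\Kmo(\Pi_h\u - \u),\v_h) + \sigma_h(\Kmo\Pi_h\u,\v_h)$. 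Since $b_h = 0$ and $\div\,\V_h = Q_h$ by (A1), one has $\div\,\w_h \in Q_h$ and testing \eqref{eq:var2a} with $q_h = \div\,\w_h$ forces $\div\,\w_h = 0$.

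This is the decisive observation: because $0 \in Q_h^* = \div\,\V_h^*$, the special implication in (A7) yields $\w_h \in \V_h^*$, so $\w_h$ is an admissible test function for the sharpened quadrature estimate of Lemma~\ref{lem:quaderror2}. I would then test \eqref{eq:var1a} with $\v_h = \w_h$; since $\div\,\w_h = 0$ the pressure term $(r_h,\div\,\w_h)$ vanishes, leaving $(\Kmo\w_h,\w_h)_h = a_h(\w_h)$. Applying the kernel ellipticity \eqref{eq:equiv2} on the left, the Cauchy--Schwarz inequality with (A6) to the projection term, and Lemma~\ref{lem:quaderror2} to the quadrature term $\sigma_h(\Kmo\Pi_h\u,\w_h)$ (which is legitimate precisely because $\w_h \in \V_h^*$), I obtain
\begin{align*}
\hat c^{-1}\|\w_h\|_{L^2(\Omega)}^2 \le C h^2 \|\u\|_{H^2(\T_h)}\,\|\w_h\|_{L^2(\Omega)},
\end{align*}
and dividing gives $\|\w_h\|_{L^2(\Omega)} \le C h^2\|\u\|_{H^2(\T_h)}$. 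Combined with the approximation error, this proves the first assertion.

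For the divergence estimate I would use the commuting diagram (A2), $\div\,\Pi_h\u = \pi_h\div\,\u$, together with \eqref{eq:var2h} and $f = \div\,\u$, which give $(\div\,\u_h,q_h) = (\div\,\u,q_h)$ for all $q_h \in Q_h$ and hence $\div\,\u_h = \pi_h\div\,\u = \div\,\Pi_h\u$. Thus $\div(\u - \u_h) = \div\,\u - \pi_h\div\,\u$ is merely the projection error of $\div\,\u$ onto $Q_h$. Since $\pi_h$ is local and $L^2$-orthogonal with $Q_h \supseteq P_1(\T_h)$, a standard Bramble--Hilbert argument bounds this elementwise by $C h_\E^2\|\nabla^2\div\,\u\|_{L^2(\E)}$, and summation yields $\|\div(\u-\u_h)\|_{L^2(\Omega)} \le C h^2\|\div\,\u\|_{H^2(\T_h)}$.

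The main obstacle is the argument of the second paragraph. The naive route through Lemma~\ref{lem:wellposeda} would only deliver $O(h)$, because the improved quadrature bound of Lemma~\ref{lem:quaderror2} is available only on the restricted space $\V_h^*$ and not on all of $\V_h$. The essential point, and the place where the special construction of the spaces is used, is that the discrete error $\w_h$ is automatically divergence-free, so that the implication in (A7) places it inside $\V_h^*$; testing within this divergence-free kernel is what makes the sharper estimate applicable. Everything downstream is routine Cauchy--Schwarz, ellipticity, and approximation theory.
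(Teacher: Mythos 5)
Your proposal is correct and follows essentially the same route as the paper's own proof: establish $\div\,\w_h=0$ via the commuting property (A2) and the variational equations, use the implication in (A7) (which the paper mistakenly attributes to (A3)) to place $\w_h$ in $\V_h^*$, then combine kernel ellipticity, (A6), and Lemma~\ref{lem:quaderror2} to get the $O(h^2)$ bound, with the divergence estimate reduced to the projection error $\div\,\u-\pi_h\div\,\u$. The only difference is cosmetic: you make explicit the hypothesis $Q_h\supseteq P_1(\T_h)$ needed for the second-order bound on that projection error, which the paper leaves implicit (it holds for all the concrete elements of Section~\ref{sec:2d}).
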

\begin{proof}
We proceed with similar arguments as in the proof of Theorem~\ref{thm:err1}.
Denote by $\w_h = \Pi_h\u -\u_h$ and $r_h = \pi_h p - p_h$ the discrete error components. 
Using assumption (A2), we deduce from equations \eqref{eq:var2} and \eqref{eq:var2h} that 
\begin{align} \label{eq:div}
(\div\,\Pi_h\u,q_h) 
&= (\pi_h \div\,\u, q_h)
 = (\div\,\u, q_h)
 = (\div\,\u_h,q_h)
\end{align}
for all $q_h \in Q_h$. Since $Q_h=\div\,\V_h$, this implies $\div\,\Pi_h\u \equiv \div\,\u_h$ and therefore 
\begin{align} \label{eq:div0}
\div\,\w_h = \div (\Pi_h\u -\u_h)=0.
\end{align}
From the variational characterizations \eqref{eq:var1}--\eqref{eq:var2} and \eqref{eq:var1h}--\eqref{eq:var2h} of the 
continuous and the discrete solution, we can thus infer that 
\begin{align*}
c \|\w_h\|^2_{L^2(\Omega)} 
&\le (\Kmo (\Pi_h\u -\u_h),\w_h)_h 
 = (\Kmo (\Pi_h\u -\u_h),\w_h)_h + (\pi_h^1 p - p_h, \div\,\w_h ) \\
&= (\Kmo(\Pi_h\u -\u),\w_h) + \sigma_h(\Kmo \Pi_h\u,\w_h) = a_h(\w_h).
\end{align*}
From equation \eqref{eq:div0} and assumption (A3), we can further deduce that $\w_h \in \V_h^*$. 
Together with assumption (A6) and Lemma~\ref{lem:quaderror2}, we then obtain 
\begin{align*}
a_h(\w_h) 
&\le C \|\Pi_h\u -\u\|_{L^2(\Omega)} \|\w_h\|_{L^2(\Omega)} + |\sigma_h(\Kmo\Pi_h\u,\w_h)| \\
&\le Ch^2(1+\|\Kmo\|_{W^{2,\infty}(\T_h)})\|\u\|_{H^2(\T_h)} \|\w_h\|_{L^2(\Omega)}.
\end{align*}
This immediately implies that
\begin{align*}
\|\u-\u_h\|_{L^2(\Omega)} 
&\le \|\u - \Pi_h\u\|_{L^2(\Omega)} + \|\Pi_h\u -\u_h\|_{L^2(\Omega)} \\
&\le C h^2(2+\|\Kmo\|_{W^{2,\infty}(\T_h)})\|\u\|_{H^2(\T_h)},
\end{align*}
which is the desired estimate for the velocity error in $L^2$.
From equation \eqref{eq:div} and assumption (A2), we further deduce that 
\begin{align*}
\|\div (\u -\u_h)\|_{L^2(\Omega)} = \|\div\,\u - \pi_h \div\,\u\|_{L^2(\Omega)} \le C h^2 \|\div\,\u\|_{H^2(\T_h)},
 \end{align*}
which yields the remaining estimate for the divergence error.
\end{proof}

\begin{remark}
Note that, due to assumption (A1), the velocity error only depends on the approximation properties of the velocity space $\V_h$; see \cite{BoffiBrezziFortin13}. This will be the case also for the estimates of the pressure error presented in the following sections. 
\end{remark}

\subsection{Estimates for the pressure}

In order to prove second order estimates for the pressure, 
we additionally require that the spaces $\V_h^*$ and $Q_h^*$ make up a stable pairing. 
We denote by $\pi_h^* : L^2(\Omega) \to Q_h^*$ the $L^2$-orthogonal projection onto $Q_h^*$, defined by
\begin{align}
 (\pi_h^* p,q_h^*) = (p,q_h^*) \qquad \forall q_h^* \in Q_h^*,
\end{align}
and we now assume that there exists 
\begin{itemize}
\item[(A8)] a linear bounded projection operator $\Pi_h^* : H^1(\T_h)^d \cap H(\div,\Omega) \to\V_h^*$ such that the commuting property $\pi_h^* \div\,\v = \div\,\Pi_h^*\v$ holds for all 
$\v \in H(\div,\Omega) \cap H^1(\T_h)^d$.
\end{itemize}

Together with the second order estimates for the velocity error, we now obtain.

\begin{lemma} \label{lem:err2b}
Let (A1)--(A8) hold. Moreover, assume that
$\K \in W^{2,\infty}(\T_h)$. Then 
\begin{align}
\|\pi_h^0(p-p_h)\|_{L^2(\Omega)} + \|\pi_h^* (p - p_h)\|_{L^2(\Omega)} \le C h^2\|\u\|_{H^2(\T_h)}
\end{align}
\end{lemma}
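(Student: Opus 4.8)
The plan is to estimate the two projected pressure errors by testing the velocity equation against carefully chosen test functions supplied by the new projection operators. Since $\pi_h^0(p-p_h) = \pi_h^0 p - \pi_h^0 p_h$ and $Q_h^* \supseteq P_0(\T_h)$ (as $P_0(\T_h) \subseteq Q_h^*$ should follow from the space definitions), the estimate for $\|\pi_h^0(p-p_h)\|_{L^2(\Omega)}$ will follow once we control $\|\pi_h^*(p-p_h)\|_{L^2(\Omega)}$; indeed $\pi_h^0 = \pi_h^0 \pi_h^*$ on $L^2$, so $\|\pi_h^0(p-p_h)\|_{L^2} \le \|\pi_h^*(p-p_h)\|_{L^2}$. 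Thus I would reduce everything to bounding $\|\pi_h^*(p-p_h)\|_{L^2(\Omega)}$.

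First I would write $\pi_h^*(p - p_h) = (\pi_h^* p - \pi_h^* p_h)$ and introduce $s_h := \pi_h^* p - p_h \in Q_h^*$ (using that $p_h \in Q_h$ and that $\pi_h^* p_h$ relates to $p_h$ appropriately — more carefully, I would work with $\pi_h^*(p-p_h) \in Q_h^*$ directly as the quantity to estimate). To bound a function in $Q_h^*$, I would exploit the stable pairing from assumption (A8): for the given $s_h \in Q_h^*$, the inf-sup condition induced by $\Pi_h^*$ and the surjectivity $\div\,\V_h^* = Q_h^*$ provides a $\v_h^* \in \V_h^*$ with $\div\,\v_h^* = s_h$ and $\|\v_h^*\|_{H(\div)} \le C\|s_h\|_{L^2(\Omega)}$. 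Then I would compute $\|s_h\|_{L^2(\Omega)}^2 = (s_h, \div\,\v_h^*)$ and replace $(p-p_h, \div\,\v_h^*)$ via the continuous and discrete momentum equations \eqref{eq:var1} and \eqref{eq:var1h}.

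The key algebraic step is the error equation: subtracting \eqref{eq:var1} tested at $\v_h^*$ from \eqref{eq:var1h} tested at $\v_h^*$ gives
\begin{align*}
(p-p_h, \div\,\v_h^*) = (\Kmo \u, \v_h^*) - (\Kmo \u_h, \v_h^*)_h.
\end{align*}
I would then insert $\Pi_h \u$ as an intermediary and split the right-hand side into a genuine velocity-error term $(\Kmo(\u - \u_h), \v_h^*)$, a projection-consistency term involving $\Pi_h\u - \u$, and a quadrature-consistency term $\sigma_h(\Kmo \Pi_h \u, \v_h^*)$. The velocity term is controlled by Lemma~\ref{lem:err2a} yielding the factor $Ch^2\|\u\|_{H^2(\T_h)}$; the projection term is $O(h^2)$ by (A6); and the quadrature term is $O(h^2)$ by Lemma~\ref{lem:quaderror2}, which is precisely why the test function must lie in $\V_h^*$ rather than general $\V_h$. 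Dividing by $\|s_h\|_{L^2(\Omega)}$ (using $\|\v_h^*\|_{H(\div)} \le C\|s_h\|_{L^2(\Omega)}$) then closes the estimate.

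The main obstacle I anticipate is bookkeeping around the projection $\pi_h^*$: I must verify that testing with $\v_h^* \in \V_h^*$ interacts correctly with the divergence constraint, i.e. that $(\pi_h^* p - p_h, \div\,\v_h^*) = (p - p_h, \div\,\v_h^*)$, which requires $\div\,\v_h^* \in Q_h^*$ so that the $\pi_h^*$-projection can be inserted without error — this holds by construction since $\v_h^* \in \V_h^*$ forces $\div\,\v_h^* \in Q_h^* = \div\,\V_h^*$. A second delicate point is that Lemma~\ref{lem:quaderror2} gives the second-order quadrature bound only for test functions in $\V_h^*$, so the entire argument hinges on the stability assumption (A8) producing a test function in that restricted space; this is the crux that forces the introduction of the $\V_h^*$--$Q_h^*$ pairing and is the step most likely to demand care in the precise definition of the spaces.
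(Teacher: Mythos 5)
Your proposal is correct and takes essentially the same route as the paper's proof: reduce to $\|\pi_h^*(p-p_h)\|_{L^2(\Omega)}$ via $P_0(\T_h)\subset Q_h^*$, pick a stable divergence preimage $\v_h^*\in\V_h^*$ of $\pi_h^*(p-p_h)$, drop the projection in $(\pi_h^*(p-p_h),\div\,\v_h^*)=(p-p_h,\div\,\v_h^*)$, form the error equation from \eqref{eq:var1} and \eqref{eq:var1h}, insert $\Pi_h\u$, and bound the three resulting terms by Lemma~\ref{lem:err2a}, assumption (A6), and Lemma~\ref{lem:quaderror2}, exactly exploiting that the test function lies in $\V_h^*$. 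The only difference is minor bookkeeping in which terms carry the lumped inner product (your split leaves a term $\sigma_h(\Kmo(\Pi_h\u-\u_h),\v_h^*)$ controlled by the norm equivalence \eqref{eq:equiv2} and the discrete velocity error, whereas the paper keeps $(\Kmo(\Pi_h\u-\u_h),\v_h^*)_h$ directly), which is immaterial to the result.
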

\begin{proof}
By assumption (A7), we have $\pi_h^*(p - p_h) = \div\,\v_h^*$ 
for some $\v_h^* \in\V_h^*$ with bound $\|\v_h^*\|_{H(\div)} \le \beta^{-1}\|\pi_h^*(p - p_h)\|_{L^2(\Omega)}$. 
Using the variational equations \eqref{eq:var1}, \eqref{eq:var1h}, as well as assumption (A8), we further deduce that
\begin{align*}
\|\pi_h^* (p - p_h)\|^2_{L^2(\Omega)} 
&\le (\pi_h^* (p - p_h), \div\,\v_h^*) 
= (p-p_h,\div\,\v_h^*) \\
&= (\Kmo (\Pi_h\u -\u_h),\v_h^*)_h + (\Kmo (\Pi_h\u -\u),\v_h^*)  - \sigma_h(\Kmo \Pi_h\u,\v_h^*) \\
&\le Ch^2 \|\u\|_{H^2(\T_h)}\|\v_h^*\|_{L^2(\Omega)}. 
\end{align*}
In the last step, we employed the equivalence of norms (A3), the bounds for the coefficients, the estimates for the approximation error in Lemma~\ref{lem:err2a}, the improved estimate for the quadrature error in (A8), and the previous estimate for the discrete velocity error. 
The estimate for $\|\pi_h^*(p-p_h)\|_{L^2(\Omega)}$ is now obtained by the stability estimate 
$\|\v_h^*\|_{L^2(\Omega)} \le \|\v_h^*\|_{H(\div)} \le \beta^{-1} \|\pi_h^* (p - p_h)\|_{L^2(\Omega)}$ in assumption (A7). 
Since $P_0(\T_h) \subset Q_h^*$, we further have $\|\pi_h^0(p-p_h)\|_{L^2(\Omega)} \le \|\pi_h^*(p-p_h)\|_{L^2(\Omega)}$, which completes the proof.
\end{proof}

\subsection{Super-convegence for the pressure}

As a last step, we now show that even third order convergence for the pressure can be obtained, assuming the domain $\Omega$ is convex.

\begin{lemma}\label{lem:err2c}
Let (A1)--(A8) hold, $\Omega$ be convex, and $\K\in W^{1,\infty}(\Omega) \cap W^{2,\infty}(\T_h)$. 
Then 
\begin{align*}
\|\pi_h^0(p-p_h)\|_{L^2(\Omega)} + \|\pi_h^* (p - p_h)\|_{L^2(\Omega)} \le C h^3\|\u\|_{H^2(\T_h)}.
\end{align*}
\end{lemma}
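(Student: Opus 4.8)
The plan is to prove the estimate by a duality (Aubin--Nitsche) argument, building on the second order velocity bound of Lemma~\ref{lem:err2a} and the stable pairing $(\V_h^*,Q_h^*)$ already used in Lemma~\ref{lem:err2b}. Since $P_0(\T_h)\subset Q_h^*$ gives $\|\pi_h^0(p-p_h)\|_{L^2(\Omega)}\le\|\pi_h^*(p-p_h)\|_{L^2(\Omega)}$, it suffices to bound the latter. I would set $\theta:=\pi_h^*(p-p_h)\in Q_h^*$ and introduce the dual problem $-\div(\K\nabla\phi)=\theta$ in $\Omega$ with $\phi=0$ on $\partial\Omega$. Because $\Omega$ is convex and $\K\in W^{1,\infty}(\Omega)$, elliptic regularity yields $\phi\in H^2(\Omega)$ with $\|\phi\|_{H^2(\Omega)}\le C\|\theta\|_{L^2(\Omega)}$, and the dual velocity $w:=-\K\nabla\phi$ then satisfies $\div\,w=\theta$, $w\in H^1(\Omega)^d$ and $\|w\|_{H^1(\Omega)}\le C\|\theta\|_{L^2(\Omega)}$.

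Next I would test with $\v_h^*:=\Pi_h^*w\in\V_h^*$. By assumption (A8) and $\theta\in Q_h^*$ one has $\div\,\v_h^*=\pi_h^*\div\,w=\theta$, while the inclusion $P_0(\T_h)^d\subset\V_h^*$ together with the boundedness of $\Pi_h^*$ gives the approximation bound $\|w-\v_h^*\|_{L^2(\Omega)}\le Ch\|w\|_{H^1(\Omega)}$. Using $\|\theta\|_{L^2(\Omega)}^2=(p-p_h,\theta)=(p-p_h,\div\,\v_h^*)$ and inserting the continuous and discrete first equations tested against $\v_h^*$ (exactly as in the proof of Lemma~\ref{lem:err2b}) produces the identity $\|\theta\|_{L^2(\Omega)}^2=(\Kmo(\u-\u_h),\v_h^*)-\sigma_h(\Kmo\u_h,\v_h^*)$. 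The goal is then to show that both terms on the right are $O(h^3)\|\u\|_{H^2(\T_h)}\|\theta\|_{L^2(\Omega)}$; dividing by $\|\theta\|_{L^2(\Omega)}$ gives the claim.

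For the first, consistency-free term I would write $(\Kmo(\u-\u_h),\v_h^*)=(\Kmo(\u-\u_h),w)+(\Kmo(\u-\u_h),\v_h^*-w)$. The dual relation $(\Kmo w,\cdot)=(\phi,\div\,\cdot)$, which follows from $\Kmo w=-\nabla\phi$ and $\phi|_{\partial\Omega}=0$, turns the first summand into $(\phi,\div(\u-\u_h))$; the commuting diagram (A2) gives $\div(\u-\u_h)=(I-\pi_h)\div\,\u$, which is orthogonal to $Q_h\supseteq P_1(\T_h)$, so this equals $(\phi-\pi_h\phi,(I-\pi_h)\div\,\u)$ and is bounded by $Ch^2\|\phi\|_{H^2(\Omega)}\cdot h\|\div\,\u\|_{H^1(\T_h)}=O(h^3)$ (here the second order of $\pi_h$, i.e. $Q_h\supseteq P_1(\T_h)$, is used). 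The second summand is controlled by $\|\u-\u_h\|_{L^2(\Omega)}\|w-\v_h^*\|_{L^2(\Omega)}\le Ch^2\cdot Ch$ via Lemma~\ref{lem:err2a}, hence also $O(h^3)$.

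The main obstacle is the quadrature-consistency term $\sigma_h(\Kmo\u_h,\v_h^*)$: the generic estimate of Lemma~\ref{lem:quaderror2} only delivers $O(h^2)$, so one extra power of $h$ must be extracted from the smoothness of $w$. Writing $\u_h=\Pi_h\u-\w_h$ with $\|\w_h\|_{L^2(\Omega)}\le Ch^2\|\u\|_{H^2(\T_h)}$, and using exactness on $P_1(\T_h)^d\times\V_h^*$ (A7), I would replace the first argument by its second order remainder $a^\perp:=\Kmo\Pi_h\u-\pi_h^1(\Kmo\Pi_h\u)$, which is $O(h^2)$ and, since $\pi_h^1$ is the $L^2$-orthogonal projection, has vanishing element averages. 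The decisive structural fact is that $\div P_1(\T_h)^d\subseteq P_0(\T_h)\subseteq Q_h^*$ forces $P_1(\T_h)^d\subseteq\V_h^*$, so $\sigma_h$ in fact vanishes on $P_1(\T_h)^d\times P_1(\T_h)^d$ and the oscillatory part $\v_h^*-\pi_h^0\v_h^*$ remains available with $\|\v_h^*-\pi_h^0\v_h^*\|_{L^2(\Omega)}\le Ch\|w\|_{H^1(\Omega)}$. Splitting $\v_h^*=\pi_h^0\v_h^*+(\v_h^*-\pi_h^0\v_h^*)$, the contribution pairing $a^\perp$ with the oscillatory part is immediately $O(h^3)$, while the remaining piece $\sigma_h(a^\perp,\pi_h^0\v_h^*)$ pairs the $O(h^2)$ remainder with the $O(1)$ element means of $w$ and is the genuine crux. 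The plan here is to exploit the mean-zero property of $a^\perp$ to rewrite $\sigma_h(a^\perp,\pi_h^0\v_h^*)$ as the scalar quadrature error of $a^\perp$ weighted by the slowly varying means of the smooth $w$, and to recover the last power of $h$ by a discrete summation-by-parts argument using that this leading quadrature error is in divergence form; this is precisely the step where the special approximation and quadrature properties of the spaces, beyond those invoked in Section~\ref{sec:first}, must be brought to bear. Finally, collecting the estimates and using $\|\v_h^*\|_{L^2(\Omega)}\le C\|w\|_{H^1(\Omega)}\le C\|\theta\|_{L^2(\Omega)}$ closes the argument, and $\|\pi_h^0(p-p_h)\|_{L^2(\Omega)}\le\|\pi_h^*(p-p_h)\|_{L^2(\Omega)}$ yields the full statement.
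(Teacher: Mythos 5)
Your duality setup and your treatment of the consistency-free term coincide with the paper's own proof: the same dual problem, the same test function $\v_h^*=\Pi_h^*w$ (the paper writes $\Pi_h^*(\K\nabla\phi)$), the same identity $\|\pi_h^*(p-p_h)\|_{L^2(\Omega)}^2=(\Kmo(\u-\u_h),\v_h^*)-\sigma_h(\Kmo\u_h,\v_h^*)$, and the same splitting of the first term via $(\Kmo(\u-\u_h),\K\nabla\phi)=-(\div(\u-\u_h),\phi-\pi_h^1\phi)$ together with Lemma~\ref{lem:err2a}. Up to that point your argument is sound and matches the paper.

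The gap is the quadrature term $\sigma_h(\Kmo\u_h,\v_h^*)$, which you yourself call ``the genuine crux'' and for which you offer only a plan (``mean-zero property \dots discrete summation-by-parts \dots divergence form''), not an argument. This step carries the whole content of the lemma, and the mechanism you sketch is doubtful: since $a^\perp=(\mathrm{id}-\pi_h^1)(\Kmo\Pi_h\u)$ has vanishing exact integral against constants, the critical piece $\sigma_T(a^\perp,\pi_h^0\v_h^*)$ reduces to the plain quadrature sum of $a^\perp\cdot\pi_h^0\v_h^*$; this contains the quadrature error of $\Kmo$ itself tested against constants, for which no exactness is available and no telescoping structure across elements is apparent, so nothing in your outline produces the missing power of $h$. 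The paper closes this step by a different route: it establishes the local \emph{bilinear} bound $\sigma_T(\Kmo\u_h,\v_h^*)\le C h_T^3\,\|\Kmo\|_{W^{2,\infty}(\E)}\,\|\u_h\|_{H^2(\E)}\,\|\v_h^*\|_{H^1(\E)}$, exploiting the exactness (A7) in both slots, and then removes the strong norms of the discrete functions by the inverse inequality $\|\Pi_\E\u-\u_h\|_{H^2(\E)}\le C h_\E^{-2}\|\Pi_\E\u-\u_h\|_{L^2(\E)}$, the $H^2$-stability of $\Pi_\E$ and $H^1$-stability of $\Pi_h^*$ (in the spirit of Lemma~\ref{lem:h1stability}), and the already proved discrete velocity bound $\|\Pi_h\u-\u_h\|_{L^2(\Omega)}\le Ch^2\|\u\|_{H^2(\T_h)}$, so that $(ii)\le Ch^3\|\u\|_{H^2(\T_h)}\|\pi_h^*(p-p_h)\|_{L^2(\Omega)}$. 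These ingredients --- a quadrature estimate that gains $h^2$ from $\|\u_h\|_{H^2}$ and $h$ from $\|\v_h^*\|_{H^1}$ with \emph{both} arguments discrete, plus the inverse-inequality bookkeeping that makes such a bound usable --- are absent from your proposal; as it stands, the proof is incomplete precisely at its decisive point.
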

\begin{proof}
Let $\phi$ denote the solution to the auxiliary problem
\begin{alignat*}{2}
	\div (\K \nabla\phi) &= \pi_h^*(p-p_h) \qquad &&\text{in }\Omega,\\
	\phi &= 0 \qquad &&\text{on }\partial\Omega,
\end{alignat*}
and note that $\|\phi\|_{H^2(\Omega)}\leq 
C\|\pi_h^*(p- p_h)\|_{L^2(\Omega)}$ due to convexity of the domain.  
Then, by assumption (A8) for the projectors $\pi_h^*$ and $\Pi_h^*$, we obtain
\begin{align*}
\|\pi_h^*(p-p_h)\|_{L^2(\Omega)}^2 
&=(\pi_h^*(p-p_h),\pi_h^* \div (\K\nabla\phi))
 =(\pi_h^*(p-p_h),\div(\Pi_h^* (\K\nabla\phi)))\\
&=(\Kmo(\u-\u_h),\Pi_h^* (\K\nabla\phi))-
\sigma_h(\Kmo \u_h,\Pi_h^*(\K\nabla\phi))
 =(i)+(ii).
\end{align*}
Using that $(\u-\u_h,\nabla \phi) = -(\div (\u-\u_h),\phi) = -(\div (\u-\u_h),\phi - \pi_h^1 \phi)$, 
which follows from \eqref{eq:var2} and \eqref{eq:var2h}, 
the first term can be further estimated by
\begin{align*}
(i) 
&=(\Kmo(\u-\u_h),\Pi_h^*(K\nabla\phi)-K\nabla\phi)-
(\div(\u-\u_h),\phi-\pi_h^1\phi)\\
&\le C(\|\u-\u_h\|_{L^2(\Omega)} \|\Pi_h^*(\K\nabla \phi) - 
\K \nabla \phi\|_{L^2(\Omega)} + 
\|\div(\u-\u_h)\|_{L^2(\Omega)}\|\phi - \pi_h^1 \phi\|_{L^2(\Omega)})\\
&\leq C h^3(\|\u\|_{H^2(\T_h)} + \|\div\,\u\|_{H^1(\T_h)}) 
\|\phi\|_{H^2(\T_h)})
\le C' h^3\|\u\|_{H^2(\T_h)}\|\pi_h^*(p-p_h)\|_{L^2(\Omega)}.
\end{align*}
For the second term, we can use assumption (A7), similar arguments as in the proof of Lemma~\ref{lem:quaderror2}, 
and the regularity of $\K$, to show that locally
\begin{align*}
(ii)_T
&=\sigma_T(\Kmo \u_h,\Pi_h^*(\K\nabla\phi)) \\
&\le C h_T^3 \|\Kmo\|_{W^{2,\infty}(T)} \big( \|\u_h\|_{H^2(\E)}\|\Pi_h^*(\K\nabla\phi)\|_{H^1(\E)} \big) \\
&\le C' \big(h_\E \|\Pi_T \u-\u_h\|_{L^2(\E)} + h_T^3 \|\Pi_T \u\|_{H^2(\E)}\big) \|\Pi_h^*(\K\nabla\phi)\|_{H^1(\E)}.
\end{align*}
With similar arguments as in Lemma~\ref{lem:h1stability}, one can show that $\|\Pi_T u\|_{H^2(T)} \le C \|u\|_{H^2(T)}$ and 
$\|\Pi_h^* (\K \nabla \phi)\|_{H^1(T)} \le C \|\K \nabla \phi\|_{H^1(T)}$.
From Lemma~\ref{lem:err2b} and assumption (A6), we further obtain $\|\Pi_T u - u_h\|_{L^2(\Omega)} \le C h_T^2 \|u\|_{H^2(\T_h)}$. 
By summation over all elements, we then get
\begin{align*}
(ii) &\le C h^3 \|u\|_{H^2(\T_h)} \|\pi_h^*(p-p_h)\|_{L^2(\Omega)},
\end{align*}
and since $P_0(\T_h) \subset Q_h^*$, we also have $\|\pi_h^0(p-p_h)\|_{L^2(\Omega)} \le \|\pi_h^*(p-p_h)\|_{L^2(\Omega)}$.
\end{proof}

\subsection{Post-processing for the pressure}

In the spirit of Stenberg~\cite{Stenberg91}, we now define the following local post-processing procedure.

\begin{problem}[Post-processing for the pressure] \label{prob:post}$ $\\
Find $\widetilde p_h\in \P_{2}(\T_h)$ such that for all $\E \in \T_h$ there holds
\begin{alignat}{2}
(\nabla \widetilde p_h, \nabla \widetilde q_h)_\E &=
-(\Kmo\u_h, \nabla \widetilde q_h)_\E
\qquad && \forall\,\widetilde q_h \in \P_2(\E), \label{eq:pp1} \\
(\widetilde p_h,q_h^0)_\E &= (p_h,q_h^0)_\E && 
\forall\,q_h^0 \in \P_0(\E). \label{eq:pp2}
\end{alignat}
\end{problem}

Note that $\widetilde p_h$ can be computed separately on each element $\E$, rendering the method computatinally efficient. 
Following the analysis of \cite{Stenberg91}, we obtain the following bounds.
\begin{lemma}\label{lem:estimatepp}
Let (A1)--(A8) hold. Then
\begin{align*}
\|p-\widetilde p_h\|_{L^2(\E)} 
\le \|p - \pi_h^2 p\|_{L^2(\E)} 
&+ \|\pi_h^0 (p - p_h)\|_{L^2(\E)} \\
&+ C h_\E (\|\u -\u_h\|_{L^2(\E)} + \|\nabla (\pi_h^2 p - p)\|_{L^2(\E)}).
\end{align*}
\end{lemma}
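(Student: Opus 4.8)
The plan is to estimate $\|p-\widetilde p_h\|_{L^2(\E)}$ on each element separately by comparing $\widetilde p_h$ against the $L^2$-projection $\pi_h^2 p$ of the exact pressure onto $\P_2(\E)$, and then to bound $\|\pi_h^2 p - \widetilde p_h\|_{L^2(\E)}$ using the defining equations \eqref{eq:pp1}--\eqref{eq:pp2} of the post-processing problem. The triangle inequality gives
\begin{align*}
\|p-\widetilde p_h\|_{L^2(\E)} \le \|p - \pi_h^2 p\|_{L^2(\E)} + \|\pi_h^2 p - \widetilde p_h\|_{L^2(\E)},
\end{align*}
so the whole argument reduces to controlling the second term, which lives in the finite-dimensional space $\P_2(\E)$.

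First I would split the difference $\delta_\E := \pi_h^2 p - \widetilde p_h \in \P_2(\E)$ according to its constant part and its mean-free part. For the constant part, equation \eqref{eq:pp2} tells us that $\widetilde p_h$ and $p_h$ have the same element average, so the constant component of $\delta_\E$ is governed by $\pi_h^0(\pi_h^2 p - p_h) = \pi_h^0(p - p_h)$; this produces the term $\|\pi_h^0(p-p_h)\|_{L^2(\E)}$ in the stated bound. For the mean-free part, I would test \eqref{eq:pp1} with the mean-free component $\widetilde q_h$ of $\delta_\E$ itself. A local Poincaré inequality on the shape-regular element $\E$ gives $\|\widetilde q_h\|_{L^2(\E)} \le C h_\E \|\nabla \widetilde q_h\|_{L^2(\E)}$, so it suffices to bound $\|\nabla \widetilde q_h\|_{L^2(\E)}$. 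Using \eqref{eq:pp1} and the continuous relation $\Kmo \u = -\nabla p$ from \eqref{eq:sys1}, one writes
\begin{align*}
(\nabla \widetilde q_h, \nabla \widetilde q_h)_\E
= (\nabla \pi_h^2 p - \nabla \widetilde p_h, \nabla \widetilde q_h)_\E
= (\nabla(\pi_h^2 p - p), \nabla \widetilde q_h)_\E + (\Kmo(\u - \u_h), \nabla \widetilde q_h)_\E,
\end{align*}
where the term $(\nabla p + \Kmo \u_h, \nabla \widetilde q_h)_\E$ has been rewritten by adding and subtracting $\Kmo \u$ and using $\nabla p = -\Kmo\u$. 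Applying Cauchy--Schwarz to each factor, using the boundedness of $\Kmo$, and dividing through by $\|\nabla \widetilde q_h\|_{L^2(\E)}$ yields
\begin{align*}
\|\nabla \widetilde q_h\|_{L^2(\E)} \le C\big(\|\nabla(\pi_h^2 p - p)\|_{L^2(\E)} + \|\u - \u_h\|_{L^2(\E)}\big).
\end{align*}
Combining this with the Poincaré factor $h_\E$ gives exactly the last line of the claimed estimate.

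The main obstacle I expect is the bookkeeping in the mean-free estimate: one must be careful that testing \eqref{eq:pp1} is legitimate for $\widetilde q_h \in \P_2(\E)$ (the equation holds for all such test functions, so this is fine) and that the constant ambiguity in $\widetilde q_h$ is correctly absorbed, since $\nabla$ annihilates constants and \eqref{eq:pp1} only constrains $\widetilde p_h$ up to a constant that is then fixed by \eqref{eq:pp2}. The decomposition into constant and mean-free parts is what cleanly separates the two sources of error, and the only genuinely non-routine point is ensuring the Poincaré constant is uniform across element types, which follows from the uniform shape-regularity assumption on the mesh via the standard scaling to a finite collection of reference elements $\widehat\E_1,\dots,\widehat\E_M$.
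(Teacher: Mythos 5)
Your proposal is correct and follows essentially the same route as the paper: the same triangle inequality, the same splitting of $\pi_h^2 p - \widetilde p_h$ into its element average (handled via \eqref{eq:pp2}, giving $\|\pi_h^0(p-p_h)\|_{L^2(\E)}$) and its mean-free part (handled via the local Poincar\'e inequality), and the same energy argument testing \eqref{eq:pp1} together with $\nabla p = -\Kmo \u$ to bound $\|\nabla(\pi_h^2 p - \widetilde p_h)\|_{L^2(\E)}$ by $\|\nabla(\pi_h^2 p - p)\|_{L^2(\E)} + C\|\u-\u_h\|_{L^2(\E)}$. The only cosmetic difference is that you test with the mean-free component rather than the full difference, which changes nothing since the gradient annihilates constants.
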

\begin{proof}
For convenience of the reader, we recall the basic steps of the derivation of this result.
By the triangle inequality, one obtains
\begin{align*}
\|p-\widetilde p_h\|_{L^2(\E)} 
&\le \|p - \pi_h^2 p\|_{L^2(\E)} + 
\|\pi_h^2 p - \widetilde p_h\|_{L^2(\E)} \\
&\le \|p - \pi_h^2 p\|_{L^2(\E)} + 
\|\pi_h^0(\pi_h^2 p - \widetilde p_h)\|_{L^2(\E)} + 
\|(id - \pi_h^0)(\pi_h^2 p - \widetilde p_h)\|_{L^2(\E)} \\
&= (i)+(ii)+(iii).
\end{align*}
The first term already appears in the final estimate. 
For the second, observe that 
\begin{align*}
(ii) 
= \|\pi_h^0(\pi_h^2 p - \widetilde p_h)\|_{L^2(\E)} 
= \|\pi_h^0(p - p_h)\|_{L^2(\E)} 
\end{align*}
where we used the properties of the $L^2$ projections and \eqref{eq:pp2}.
By the optimality of the $L^2$--projection and the Poincaré inequality, 
the third term can be estimated by 
\begin{align*}
(iii) 
= \|(id - \pi_h^0)(\pi_h^2 p - \widetilde p_h)\|_{L^2(\E)} 
\le C h_\E \|\nabla (\pi_h^2 p - \widetilde p_h)\|_{L^2(\E)}.
\end{align*}
Using equation \eqref{eq:var1}, we obtain for $\widetilde q_h = \pi_h^2 p - \widetilde p_h$ that
\begin{align*}
\|\nabla \widetilde q_h\|^2_{L^2(\E)}
&= (\nabla (\pi_h^2 p -  \widetilde p_h),\nabla \widetilde q_h)_\E\\
&= (\nabla (\pi_h^2 p - p), \nabla \widetilde q_h)_\E +
(\Kmo (\u -\u_h), \nabla \widetilde q_h)_\E \\
&\le \big(\|\nabla (\pi_h^2 p - p)\|_{L^2(\E)} + C \|\u -\u_h\|_{L^2(\E)}  \big) \|\nabla \widetilde q_h\|_{L^2(\E)}.
\end{align*}
Dividing by $\|\nabla \widetilde q_h\|_{L^2(\E)}$ yields the estimate for the term (iii).
The assertion of the lemma now follows by combination with the previous estimates.
\end{proof}
By application of the previous estimates, we immediately obtain the following assertion.
\begin{lemma}
Let (A1)--(A8) hold and $\K\in W^{2,\infty}(\T_h)$. 
Then
\begin{align*}
\|p - \widetilde p_h\|_{L^2(\Omega)} \le C h^2
(\|\u\|_{H^2(\T_h)} + \|p\|_{H^2(\T_h)}) 
\end{align*}
If, in addition, $\Omega$ is convex and $\K \in W^{1,\infty}(\Omega)$, then
\begin{align*}
\|p - \widetilde p_h\|_{L^2(\Omega)} \le C h^3
(\|\u\|_{H^2(\T_h)} + \|p\|_{H^3(\T_h)}) 
\end{align*}
\end{lemma}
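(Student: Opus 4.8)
The plan is to combine the local post-processing estimate from Lemma~\ref{lem:estimatepp}, summed over all elements, with the approximation properties of the projection $\pi_h^2$ and the already-established convergence rates for the velocity and pressure errors. The right-hand side of Lemma~\ref{lem:estimatepp} consists of four terms: the projection error $\|p-\pi_h^2 p\|_{L^2(\E)}$, the cell-average pressure error $\|\pi_h^0(p-p_h)\|_{L^2(\E)}$, the velocity error $h_\E\|\u-\u_h\|_{L^2(\E)}$, and the projection gradient term $h_\E\|\nabla(\pi_h^2 p - p)\|_{L^2(\E)}$. I would bound each of these in turn using results stated earlier in the paper, then sum the squared local bounds over $\E\in\T_h$ and take a square root.

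For the second-order case, I would estimate the terms as follows. Standard approximation properties of $\pi_h^2$ (the $L^2$-projection onto $\P_2(\T_h)$) give $\|p-\pi_h^2 p\|_{L^2(\E)}\le C h_\E^2\|p\|_{H^2(\E)}$ and $\|\nabla(\pi_h^2 p - p)\|_{L^2(\E)}\le C h_\E\|p\|_{H^2(\E)}$, so both projection terms contribute at order $h^2\|p\|_{H^2(\T_h)}$. The velocity term $h_\E\|\u-\u_h\|_{L^2(\E)}$ is controlled by Lemma~\ref{lem:err2a}, which gives $\|\u-\u_h\|_{L^2(\Omega)}\le C h^2\|\u\|_{H^2(\T_h)}$, so after multiplying by $h_\E$ this term is in fact of order $h^3$ and harmless. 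The cell-average term $\|\pi_h^0(p-p_h)\|_{L^2(\Omega)}$ is bounded by Lemma~\ref{lem:err2b} at order $h^2\|\u\|_{H^2(\T_h)}$. Collecting these yields the first claimed estimate of order $h^2(\|\u\|_{H^2(\T_h)}+\|p\|_{H^2(\T_h)})$.

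For the super-convergence estimate under convexity, the improvement comes from two places. First, the cell-average term $\|\pi_h^0(p-p_h)\|_{L^2(\Omega)}$ is now controlled by Lemma~\ref{lem:err2c} at the improved order $h^3\|\u\|_{H^2(\T_h)}$, using convexity of $\Omega$ and the stronger regularity $\K\in W^{1,\infty}(\Omega)$. Second, raising the regularity of $p$ to $H^3$ upgrades the projection estimates to $\|p-\pi_h^2 p\|_{L^2(\E)}\le C h_\E^3\|p\|_{H^3(\E)}$ and $h_\E\|\nabla(\pi_h^2 p - p)\|_{L^2(\E)}\le C h_\E^3\|p\|_{H^3(\E)}$, both now at order $h^3\|p\|_{H^3(\T_h)}$. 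Since the velocity term was already at order $h^3$, every contribution is now of order $h^3$, giving the second estimate with the norm $\|\u\|_{H^2(\T_h)}+\|p\|_{H^3(\T_h)}$.

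This proof is essentially bookkeeping: the substance has already been done in the preceding lemmas, and the only real work is tracking which regularity hypothesis feeds which term. The one point requiring mild care is the velocity term, where the extra factor $h_\E$ already pushes the order one higher than the raw velocity rate, so it is never the bottleneck in either estimate. I do not anticipate a genuine obstacle; the main thing to get right is to invoke Lemma~\ref{lem:err2c} rather than Lemma~\ref{lem:err2b} for the cell-average term in the convex case, and to pair the correct Sobolev index on $p$ with each regime so that the final norms match the stated right-hand sides.
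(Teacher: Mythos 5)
Your proof is correct and follows essentially the same route as the paper: the paper's own proof simply combines Lemma~\ref{lem:estimatepp} with the approximation properties of the $L^2$-projection and with Lemmas~\ref{lem:err2a}, \ref{lem:err2b} (respectively \ref{lem:err2c} in the convex case), exactly as you do. Your bookkeeping, including the observation that the velocity term is already of order $h^3$ and never the bottleneck, matches the intended argument.
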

\begin{proof}
By the approximation properties of assumption (A6) and Lemma~\ref{lem:err2c}, we can estimated the terms on the right hand side in  Lemma~\ref{lem:estimatepp}, which yields the result. 
\end{proof}

\begin{remark}
In a similar way as in Problem~\ref{prob:post}, one could construct a piecewise linear approximation $\widetilde p_h^{\;1}$,
for which the first estimate of the previous lemma still remains valid. Let us note again that all estimates of this section, in particular also those for the pressure error, only depend on the approximation properties of the velocity space $\V_h$.
\end{remark}

\subsection{Summary of improved error estimates}

For later reference, let us summarize the 
results of the analysis given in this section in a single theorem. 
\begin{theorem}
Let (A1)--(A8) hold and $\K \in W^{2,\infty}(\T_h)$. Then 
\begin{align}
\|u-u_h\|_{L^2(\Omega)} + \|\pi_h^0 (p-p_h)\|_{L^2(\Omega)} + 
\|p - \widetilde p_h\|_{L^2(\Omega)} \le C h^2
(\|\u\|_{H^2(\T_h)} + \|p\|_{H^2(\T_h)})
\end{align}
If, in addition, $\Omega$ is convex and $\K \in W^{1,\infty}(\Omega)$, then 
\begin{align*}
\|\pi_h^0 (p-p_h)\|_{L^2(\Omega)} + \|p - \widetilde p_h\|_{L^2(\Omega)} 
\le C h^3 (\|\u\|_{H^2(\T_h)} + \|p\|_{H^3(\T_h)}) 
\end{align*}
\end{theorem}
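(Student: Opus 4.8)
The final theorem is a summary that collects the results already established in the section. The plan is therefore to assemble the conclusion from the component lemmas rather than to prove anything new from scratch.

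First I would address the velocity error $\|u-u_h\|_{L^2(\Omega)}$: this is exactly the content of Lemma~\ref{lem:err2a}, which under (A1)--(A7) and $\K\in W^{2,\infty}(\T_h)$ gives the bound $Ch^2\|\u\|_{H^2(\T_h)}$. Since the hypotheses of the present theorem include (A1)--(A8) and the same regularity of $\K$, this term is handled immediately. Next, for the projected pressure error $\|\pi_h^0(p-p_h)\|_{L^2(\Omega)}$, I would invoke Lemma~\ref{lem:err2b}, which yields the estimate $Ch^2\|\u\|_{H^2(\T_h)}$ under (A1)--(A8); note in particular that (A8) is the additional ingredient activated here (relative to the velocity estimate), so listing (A1)--(A8) as hypotheses is precisely what is needed.

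For the post-processed pressure term $\|p-\widetilde p_h\|_{L^2(\Omega)}$, I would appeal directly to the first estimate of the immediately preceding (unnamed) lemma, whose first assertion states $\|p-\widetilde p_h\|_{L^2(\Omega)}\le Ch^2(\|\u\|_{H^2(\T_h)}+\|p\|_{H^2(\T_h)})$ under (A1)--(A8) and $\K\in W^{2,\infty}(\T_h)$. Combining the three bounds by the triangle inequality and absorbing constants gives the first displayed inequality of the theorem, with the right-hand side $Ch^2(\|\u\|_{H^2(\T_h)}+\|p\|_{H^2(\T_h)})$. For the super-convergence assertion, I would add the convexity of $\Omega$ and the extra regularity $\K\in W^{1,\infty}(\Omega)$ to the hypotheses, and then use the second assertion of Lemma~\ref{lem:err2c} for the term $\|\pi_h^0(p-p_h)\|_{L^2(\Omega)}$ (giving order $h^3$) together with the second estimate of the preceding lemma for $\|p-\widetilde p_h\|_{L^2(\Omega)}$ (also order $h^3$, with a $\|p\|_{H^3(\T_h)}$ contribution); their sum yields the claimed $h^3$ bound.

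Since every individual bound is already available, I do not expect any genuine obstacle; the work is purely bookkeeping. The one point that requires care is consistency of hypotheses: one must verify that the cited lemmas are applied only where their respective assumptions hold, in particular that the jump from the $h^2$ statement to the $h^3$ statement correctly tracks the additional requirements (convexity and $\K\in W^{1,\infty}(\Omega)$) and the upgraded norm $\|p\|_{H^3(\T_h)}$. I would therefore simply write that the two assertions follow from the triangle inequality combined with Lemmas~\ref{lem:err2a}, \ref{lem:err2b}, \ref{lem:err2c} and the post-processing estimates established above, after renaming the generic constant.
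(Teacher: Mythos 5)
Your proposal is correct and coincides with the paper's own treatment: the theorem is stated there explicitly as a summary ``for later reference'' of the results of Section~\ref{sec:improved}, so its proof is exactly the bookkeeping you describe, namely combining Lemma~\ref{lem:err2a} (velocity), Lemma~\ref{lem:err2b} (projected pressure, order $h^2$), Lemma~\ref{lem:err2c} (projected pressure, order $h^3$ under convexity and $\K\in W^{1,\infty}(\Omega)$), and the two post-processing estimates of the unnamed lemma following Lemma~\ref{lem:estimatepp}, with the hypotheses tracked as you indicate. The only cosmetic slip is referring to ``the second assertion'' of Lemma~\ref{lem:err2c}, which has a single assertion already covering the $\|\pi_h^0(p-p_h)\|_{L^2(\Omega)}$ term; this does not affect the argument.
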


In the following section, we discuss appropriate finite elements and quadrature rules, for which our abstract convergence results apply. 

\section{Finite elements} \label{sec:2d}

Let us briefly comment on the general setting which is used in two and three space dimensions:
We assume that $\T_h=\{T\}$ is a shape regular conforming mesh and that 
every element $T \in \T_h$ is the affine image of a reference triangle or square, or a reference tetrahedron, cube, or prisms, i.e., $F_T(\widehat T)$ with $F_T(\widehat x) = a_T + B_T \widehat x$. 
The subspaces used in our approximations are defined locally by
\begin{align*}
V_h = \{v \in H(\div\,\Omega) : v|_T \in V_T \} 
\quad \text{and} \quad 
Q_h = \{q \in L^2(\Omega) : q|_T \in Q_T\}  
\end{align*}
with local spaces $V_T=\{ v = \frac{1}{\det B_T} B_T \widehat v \circ F_T^{-1} : \widehat v \in \widehat V\}$ and $Q_T = \{ q = \widehat q \circ F_T^{-1} : \widehat q \in \widehat Q\}$. Additionally, spaces $V_h^*$ and $Q_h^*$ are defined accordingly via mapping from the local spaces $\widehat V^*$ and $\widehat Q^*$. Only the spaces $\widehat V$, $\widehat Q$, $\widehat V^*$ and $\widehat Q^*$ on the reference elements will therefore be defined below. The local quadrature rule in all cases is of the form
\begin{align}\label{lab:quadrature}
(u,v)_{h,T} = |T| \sum_{l=1}^{N_{T}} u(F_T(\widehat r_l)) \cdot v(F_T(\widehat r_l)) \, \omega_l,
\end{align}
where $\widehat r_l$ and $\widehat \omega_l$, $l=1,\ldots,N_T$ are the quadrature points and weights on the reference element, and $|\widehat T|$ is the area of the element $\widehat T$; compare with equation \eqref{eq:quadrule}.

In the following, we introduce ansatz spaces for different reference elements and
appropriate quadrature rules for which the conditions (A1)--(A8) required for our theoretical results are valid. In addition, we discuss a choice of basis for the spaces $\widehat V$ such that the resulting mass matrix for the velocity variable becomes block diagonal; see~Figure~\ref{fig:HYBRID} and the discussion in the introduction.
We start with two dimensional elements.

\subsection{Triangles}

As quadrature points for the triangle, we use its vertices $\widehat r_1=(0,0)$, $\widehat r_2=(1,0)$, $\widehat r_3=(0,1)$ and 
its barycenter $\widehat r_4=(\frac{1}{3},\frac{1}{3})$ together with weights $\omega_1 = \omega_2 = \omega_3 = \frac{1}{12}$ and $\omega_4 = \frac{3}{4}$.
Note that the quadrature rule is exact for functions in $\P_2(\widehat T)$. 

The local function spaces on the reference triangle are defined as 
\begin{alignat*}{2}
\widehat V &= \RT_1(\widehat T), \qquad\qquad &&\widehat Q = \P_1(\widehat T), \\
\widehat V^* &= \BDM_1(\widehat T),  &&\widehat Q^* = \P_0(\widehat T),
\end{alignat*}
and we denote by $\Pi_T$ and $\Pi_T^*$ the standard interpolation operators for these spaces; for details see~\cite{BoffiBrezziFortin13}.
For this choice, all our assumptions (A1)--(A8) are valid. 

As a basis for the reference element $\widehat V = \RT_1(\widehat T)$, which complies with the quadrature points and thus leads to a block diagonal mass matrix, we choose
{\scriptsize
\begin{alignat*}{3}
	&\Phi_1(x,y)=(2x^2+xy-x,y^2+2xy-y) \qquad &&\Phi_2(x,y)=(x^2+2xy-x,2y^2+xy-y)\\
	&\Phi_3(x,y)=(-x^2+xy+x-y,y^2-xy) \qquad &&\Phi_4(x,y)=(-2x^2-xy+3x+y-1,-y^2-2xy+y)\\ 
	&\Phi_5(x,y)=(-x^2-2xy+x,-2y^2-xy+x+3y-1) \qquad &&\Phi_6(x,y)=(x^2-xy,-y^2+xy-x+y)\\
	&\Phi_7(x,y)=(xy,y^2-y) \qquad &&\Phi_8(x,y)=(x^2-x,xy)
\end{alignat*}}%
Note that, as desired, exactly two basis functions are associated to every quadrature point while all other basis functions vanish at this point; see Figure~\ref{fig:RT}.

\subsection{Quadrilaterals}

The quadrature rule on the reference rectangle is again given by its vertices $\widehat r_1=(0,0)$, $\widehat r_2=(1,0)$, $\widehat r_3=(1,0)$, and $\widehat r_4=(0,1)$ and the barycenter $\widehat r_5=(\frac{1}{2},\frac{1}{2})$ with weights $\omega_i = \frac{1}{12}$ for $1\le i\le 4$ and $\omega_5 = \frac{2}{3}$. 
One can verify that this quadrature formula is exact even for functions in $\P_3(\widehat T)$, which seems a bit surprising.

As function spaces on the reference element, we now use 
\begin{align*}
\widehat V &= \widehat V^* =\BDFM_2(\widehat T), & \widehat Q &= \widehat Q^* = \P_1(\widehat T),
\end{align*}
and we denote by $\Pi_T=\Pi_T^*$ the standard interpolation operator for the $\BDFM_2$ space; for details, see~\cite{BoffiBrezziFortin13}.
All conditions (A1)--(A8) required for our theory are valid for this choice. 

As basis for $\BDFM_2(\widehat T)$ that complies with the above quadrature rule, we choose 
{\footnotesize
\begin{alignat*}{2}
	&\Phi_1(x,y)=\left(2x^2-2xy,0\right) \qquad &&\Phi_2(x,y)=\left(2x^2+2xy-2x,0\right) \\
	&\Phi_3(x,y)=\left(0,2y^2+2xy-2y\right)\qquad &&\Phi_4(x,y)=\left(0,2y^2-2xy\right) \\
	&\Phi_5(x,y)=\left(-2x^2+2xy+2x-2y,0\right) \qquad &&\Phi_6(x,y)=\left(-2x^2-2xy+4x+2y-2,0\right) \\
	&\Phi_7(x,y)=\left(0,-2y^2-2xy+2x+4y-2\right) \qquad && \Phi_8(x,y)=\left(0,-2y^2+2xy-2x+2y\right)\\
	&\Phi_9(x,y)=\left(x^2-x,0\right) \qquad &&\Phi_{10}(x,y)=\left(0,y^2-y\right)
\end{alignat*}}%
Note that again exactly two basis functions can be associated to every quadrature point while all 
other basis functions vanish at this point. As a consequence, we obtain a block diagonal mass matrix
for the velocity variable; see Figure~\ref{fig:HYBRID} in the introduction.

\bigskip

As a next step, we define appropriate finite elements in three space dimensions.

\subsection{Tetrahedra}

As quadrature points, we again choose the vertices $\widehat r_1=(0,0,0)$, $\widehat r_2=(1,0,0)$, $\widehat r_3=(0,1,0)$, $\widehat r_4=(0,0,1)$ of the reference tetrahedron and its barycenter $\widehat r_4=(\frac{1}{4},\frac{1}{4},\frac{1}{4})$ and we set the weights to $\omega_i =  \frac{1}{20}$ for $1\le i\le 4$ and $\omega_5 = \frac{4}{5}$. Let us note that this quadrature formula is exact for functions in $\P_2(\widehat T)$. 

As function spaces on the reference element, we here choose
\begin{alignat*}{2}
\widehat V &= \RTN_1(\widehat T), \qquad\qquad &&\widehat Q = \P_1(\widehat T), \\
\widehat V^* &= \BDDF_1(\widehat T),  &&\widehat Q^* = \P_0(\widehat T),
\end{alignat*}
and we denote by $\Pi_T$ and $\Pi_T^*$ the standard interpolation operators for these spaces; see again~\cite{BoffiBrezziFortin13} for details.
All conditions (A1)--(A8) are then valid for this choice. 

We next construct an appropriate basis for the space $\widehat V=\RTN_1(\widehat T)$ on the reference element. 
We start by defining three functions 
{\footnotesize
\begin{align*}
	\Phi_{13}(x,y)=(x^2-x,xy,xz) \quad  \Phi_{14}(x,y)=(yx,y^2-y,yz) \quad  \Phi_{15}(x,y)=(zx,zy,z^2-z)
\end{align*}}%
associated to the interior quadrature point, and then define $12$ additional 
basis functions for the vertices by
{\footnotesize
\begin{alignat*}{5}
	\Phi_1(x,y)&=(x,0,0)&&+2\Phi_{13}+\Phi_{14}+\Phi_{15} \qquad &&\Phi_2(x,y)&&=(0,y,0)&&+\Phi_{13}+2\Phi_{14}+\Phi_{15} \\
	\Phi_3(x,y)&=(0,0,z)&&+\Phi_{13}+\Phi_{14}+2\Phi_{15} \qquad &&\Phi_4(x,y)&&=(-y,y,0)&&-\Phi_{13}+\Phi_{14} \\
	\Phi_5(x,y)&=(-z,0,z)&&-\Phi_{13}+\Phi_{15} \qquad &&\Phi_6(x,y)&&=(x+y+z-1,0,0)&&-2\Phi_{13}-\Phi_{14}-\Phi_{15}\\
	\Phi_7(x,y)&=(0,-z,z)&&-\Phi_{14}+\Phi_{15} \qquad &&\Phi_8(x,y)&&=(0,x+y+z-1,0)&&-\Phi_{13}-2\Phi_{14}-\Phi_{15}\\
	\Phi_9(x,y)&=(x,-x,0)&&+\Phi_{13}-\Phi_{14} \qquad &&\Phi_{10}(x,y)&&=(0,0,x+y+z-1)&&-\Phi_{13}-\Phi_{14}-2\Phi_{15}\\
	\Phi_{11}(x,y)&=(x,0,-x)&&+\Phi_{13}-\Phi_{15} \qquad &&\Phi_{12}(x,y)&&=(0,y,-y)&&+\Phi_{14}-\Phi_{15}
\end{alignat*}}%
Let us note that exactly three basis functions are associated to every quadrature point while all other basis functions vanish at this point; see Figure~\ref{fig:RTN}. As a consequence, the resulting mass matrix for 
the velocity variable will be block diagonal.
\begin{figure}[ht!]
\centering
\begin{tikzpicture}[scale=0.7]
\draw[dashed] (0,0) -- (3,0);
\draw (0,0) -- (1.5,-1.5);
\draw (3,0) -- (1.5,-1.5);
\draw (0,3) -- (1.5,-1.5);
\draw (0,0) -- (0,3);
\draw (3,0) -- (0,3);

\draw[very thick,->] (0,3) -- (0.5,3.5);
\draw[very thick,->] (0,3) -- (-0.70,3);
\draw[very thick,->] (0,3) -- (0,3.7);

\draw[very thick,->] (3,0) -- (3.5,0.5);
\draw[very thick,->] (3,0) -- (3,-0.70);
\draw[very thick,->] (3,0) -- (3.7,0);

\draw[very thick,->] (1.5,-1.5) -- (2,-2);
\draw[very thick,->] (1.5,-1.5) -- (1.5,-2.2);
\draw[very thick,->] (1.5,-1.5) -- (0.7,-1.5);

\draw[very thick,->] (0,0) -- (-0.70,0);
\draw[very thick,->] (0,0) -- (0,-0.70);
\draw[very thick,->] (0,0) -- (0.50,0.50);

\draw[very thick,->] (1.1,0.5) -- (1.8,0.5);
\draw[very thick,->] (1.1,0.5) -- (1.1,1.2);
\draw[very thick,->] (1.1,0.5) -- (0.7,0.1);

\draw[fill,red] (0,0) circle (0.12cm);
\draw[fill,red] (3,0) circle (0.12cm);
\draw[fill,red] (0,3) circle (0.12cm);
\draw[fill,red] (1.5,-1.5) circle (0.12cm);
\draw[fill,red] (1.1,0.5) circle (0.12cm);
\end{tikzpicture}
\qquad\qquad\qquad
\begin{tikzpicture}[scale=0.7]
\draw (0,0) -- (3,0);
\draw (0,0) -- (0,3);
\draw (3,0) -- (3,3);
\draw (0,3) -- (3,3);

\draw (0,3) -- (1,4);
\draw (3,3) -- (4,4);
\draw (1,4) -- (4,4);
\draw (3,0) -- (4,1);
\draw (4,4) -- (4,1);

\draw[dashed] (0,0) -- (1,1);
\draw[dashed] (4,1) -- (1,1);
\draw[dashed] (1,4) -- (1,1);

\draw[very thick,->] (3,0) -- (3,-0.70);
\draw[very thick,->] (3,0) -- (3.7,0);
\draw[very thick,->] (3,0) -- (2.5,-0.5);

\draw[very thick,->] (0,3) -- (0,3.7);
\draw[very thick,->] (0,3) -- (-0.70,3);
\draw[very thick,->] (0,3) -- (-0.5,2.5);

\draw[very thick,->] (0,0) -- (-0.70,0);
\draw[very thick,->] (0,0) -- (0,-0.70);
\draw[very thick,->] (0,0) -- (-0.50,-0.50);

\draw[very thick,->] (3,3) -- (3,3.70);
\draw[very thick,->] (3,3) -- (3.70,3);
\draw[very thick,->] (3,3) -- (2.5,2.5);

\draw[very thick,->] (4,4) -- (4,4.70);
\draw[very thick,->] (4,4) -- (4.70,4);
\draw[very thick,->] (4,4) -- (4.5,4.5);

\draw[very thick,->] (4,1) -- (4, 0.30);
\draw[very thick,->] (4,1) -- (4.7,1);
\draw[very thick,->] (4,1) -- (4.5, 1.5);

\draw[very thick,->] (1,4) -- (1,4.7);
\draw[very thick,->] (1,4) -- (0.30,4);
\draw[very thick,->] (1,4) -- (1.5,4.5);

\draw[very thick,->] (1,1) -- (0.30,1);
\draw[very thick,->] (1,1) -- (1,0.30);
\draw[very thick,->] (1,1) -- (1.50,1.50);

\draw[very thick,->] (2,2) -- (2,2.7);
\draw[very thick,->] (2,2) -- (2.7,2);
\draw[very thick,->] (2,2) -- (1.55,1.55);

\draw[fill,red] (0,0) circle (0.12cm);
\draw[fill,red] (3,0) circle (0.12cm);
\draw[fill,red] (0,3) circle (0.12cm);
\draw[fill,red] (3,3) circle (0.12cm);
\draw[fill,red] (1,1) circle (0.12cm);
\draw[fill,red] (4,1) circle (0.12cm);
\draw[fill,red] (1,4) circle (0.12cm);
\draw[fill,red] (4,4) circle (0.12cm);
\draw[fill,red] (2,2) circle (0.12cm);
\end{tikzpicture}

\caption{Degrees of freedom for the space 
$\widehat\V=\RTN_1(\widehat\E)$ on the tetrahedron (left) and for the hexahedron (right). 
The corresponding quadrature points are depicted by red dots.\label{fig:RTN}\label{fig:HEX}}
\end{figure}
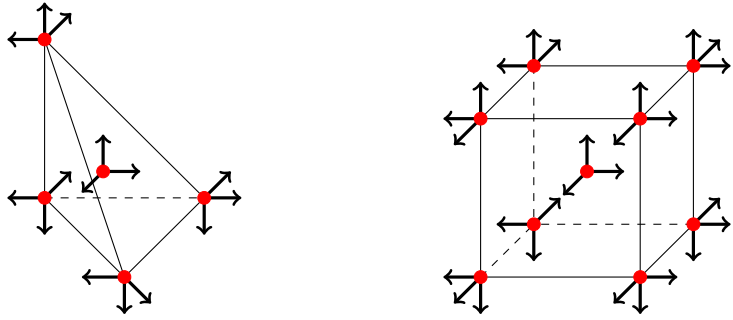

\subsection{Hexahedra}

The quadrature rule for the reference hexahedron is again defined by its vertices $\widehat r_1=(0,0,0)$, $\widehat r_2=(1,0,0)$, $\widehat r_3=(0,1,0)$, $\widehat r_4=(1,1,0)$, $\widehat r_5=(0,0,1)$, $\widehat r_6=(1,0,1)$, $\widehat r_7=(0,1,1)$, $\widehat r_8=(1,1,1)$ and its barycenter $\widehat r_4=(\frac{1}{2},\frac{1}{2},\frac{1}{2})$ with weights $\omega_i = \frac{1}{24}$ for $1\le i\le 8$ and $\omega_9 = \frac{2}{3}$.
One can verify that this quadrature formula is exact for functions in $\P_3(\widehat\E)\oplus\text{span}\{x^2yz,xy^2z,xyz^3\}$ including ceratin polynomials of fourth order.

Now let $\Q_1(\widehat\E)$ denote the space of trilinear functions on the reference hexahedron
$\widehat\E$. As local ansatz spaces for the hexahedron, we then choose
\begin{alignat*}{2}
\widehat\V = \widehat\V^* = \Q_1(\widehat\E)^3\oplus 
	\text{span}\left\{
	\begin{pmatrix}x^2\\0\\0\end{pmatrix},
	\begin{pmatrix}0\\y^2\\0\end{pmatrix},
	\begin{pmatrix}0\\0\\z^2\end{pmatrix}
	\right\}
\end{alignat*}
and we set $\widehat Q=\widehat Q^*=\div\,\widehat\V$; note that the function $xyz$ is not contained in $\widehat Q$.
In the usual manner \cite{BoffiBrezziFortin13}, an interpolation operator $\Pi_T=\Pi_T^*$ for $\widehat V$ can be defined by 
\begin{alignat*}{2}
	\int_{e}n\cdot (u-\Pi_T u)\,p_1 &= 0,  \quad &&\forall p_1\in \P_1(e), e\in\partial\E,\\
	\int_{T}(u-\Pi_T u)\,p_0 &= 0, \quad &&\forall p_0\in \P_0(\E)^3.
\end{alignat*}
This operator has the required approximation properties required for our theory and also satisfies the commuting diagram property with the $L^2$--projection on the space $\widehat Q=\widehat Q^*$.
In summary, the conditions (A1)--(A8) required for our error estimates are valid.

In order to simplify the defintion of a basis for $\widehat V$, we introduce two auxiliary functions
\begin{align*}
	a(x,y,z) = xyz\quad\text{ and }\quad b(w)=\tfrac{1}{2}w(w-1)		
\end{align*}
Then the basis for $\widehat V$ for the reference element $\widehat\E$ is defined by the $24$ functions
{\footnotesize
\begin{alignat*}{2}
\Phi_1&=(a(x,1-y,z)-b(x),0,0)\qquad &&\Phi_2=(a(x,y,z)-b(x),0,0)\\
\Phi_3&=(a(x,y,1-z)-b(x),0,0)\qquad &&\Phi_4=(a(x,1-y,1-z)-b(x),0,0)\\
\Phi_5&=-(a(1-x,1-y,z)-b(x),0,0)\qquad &&\Phi_6=-(a(1-x,y,z)-b(x),0,0)\\
\Phi_7&=-(a(1-x,y,1-z)-b(x),0,0)\qquad &&\Phi_8=-(a(1-x,1-y,1-z)-b(x),0,0)\\[2ex]
\Phi_9&=-(0,a(1-x,1-y,z)-b(y),0)\qquad &&\Phi_{10}=-(0,a(x,1-y,z)-b(y),0)\\
\Phi_{11}&=-(0,a(x,1-y,1-z)-b(y),0)\qquad &&\Phi_{12}=-(0,a(1-x,1-y,1-z)-b(y),0)\\
\Phi_{13}&=(0,a(1-x,y,z)-b(y),0)\qquad &&\Phi_{14}=(0,a(x,y,z)-b(y),0)\\
\Phi_{15}&=(0,a(x,y,1-z)-b(y),0)\qquad &&\Phi_{16}=(0,a(1-x,y,1-z)-b(y),0)\\[2ex]
\Phi_{17}&=-(0,0,a(1-x,1-y,1-z)-b(z))\qquad &&\Phi_{18}=-(0,0,a(x,1-y,1-z)-b(z))\\
\Phi_{19}&=-(0,0,a(x,y,1-z)-b(z))\qquad &&\Phi_{20}=-(0,0,a(1-x,y,1-z)-b(z))\\
\Phi_{21}&=(0,0,a(1-x,1-y,z)-b(z))\qquad &&\Phi_{22}=(0,0,a(x,1-y,z)-b(z))\\
\Phi_{23}&=(0,0,a(x,y,z)-b(z))\qquad &&\Phi_{24}=(0,0,a(1-x,y,z)-b(z))
\end{alignat*}}%
which are associated to the $8$ vertices, as well as the $3$ basis functions
{\footnotesize
\begin{align*}
	\Phi_{25}=\left(2b(x),0,0\right) \qquad 
	\Phi_{26}=\left(0,2b(y),0\right) \qquad
	\Phi_{27}=\left(0,0,2b(z)\right)
\end{align*}}%
for the barycenter. Note that again exactly three basis functions are associated to every quadrature point while all other basis functions vanish at this point; see Figure~\ref{fig:HEX}. 
As a consequence, the mass matrix for the velocity variable will be block diagonal. 

\subsection{Prisms}

The quadrature points for the reference prism are defined by its vertices $\widehat r_1=(0,0,0)$, $\widehat r_2=(1,0,0)$, $\widehat r_3=(0,1,0)$, $\widehat r_4=(0,0,1)$, $\widehat r_5=(1,0,1)$, $\widehat r_6=(0,1,1)$ and two additional interior points $\widehat r_8=(\frac{1}{3},\frac{1}{3},\frac{1}{3})$ and $\widehat r_7=(\frac{1}{3},\frac{1}{3},\frac{2}{3})$. 
Let us note that, together with the weights $\omega_i = \frac{1}{24}$ for $1\le i\le 6$ and $\omega_7 = \omega_8 = \frac{3}{8}$, this quadrature formula is exact for functions in $\P_1(\widehat\E)\oplus\text{span}\{1,x,y,z,xz,yz\}$.

The auxiliary space $\widehat V^*$ here is spanned by the following set of basis functions 
{\footnotesize
\begin{alignat*}{3}
	&\Psi_1(x,y) = (xz,0,0) \quad && \Psi_2(x,y) = (0,yz,0) \\
	&\Psi_3(x,y) = (-yz,yz,0) \quad && \Psi_4(x,y) = ((x+y-1)z,0,0) \\
	&\Psi_5(x,y) = (0,(x+y-1)z,0) \quad && \Psi_6(x,y) = (xz,-xz,0) \\
	&\Psi_7(x,y) = (x(1-z),0,0) \quad && \Psi_8(x,y) =(0,y(1-z),0)\\[2ex]
	&\Psi_9(x,y) = (-y(1-z),y(1-z),0) \quad && \Psi_{10}(x,y) = ((x+y-1)(1-z),0,0) \\
	&\Psi_{11}(x,y) = (0,(x+y-1)(1-z),0) \quad && \Psi_{12}(x,y) = (x(1-z),-x(1-z),0) \\
	&\Psi_{13}(x,y) = (0,0,(1-x-y)z) \quad && \Psi_{14}(x,y) = (0,0,xz) \\
	&\Psi_{15}(x,y) = (0,0,yz) \quad &&\Psi_{16}(x,y) = (0,0,-(1-x-y)(1-z)) \\
	&\Psi_{17}(x,y) = (0,0,-x(1-z)) \quad && \Psi_{18}(x,y) = (0,0,-y(1-z))
\end{alignat*}}%
and we set $\widehat Q^*=\div\,\widehat V^*=\P_1(\widehat T)$ as before. 
In order to define the space $\widehat V$, we enrich the space $\widehat V^*$ by additional $6$ basis functions 
{\footnotesize
\begin{alignat*}{2}
	&\Phi_{19}(x,y) = \tfrac{9}{2}(0,0,xz^2(1-z)) \qquad && \Phi_{20}(x,y) = \tfrac{9}{2}(0,0,xz(1-z)^2) \\
	&\Phi_{21}(x,y) = 9(xyz(1-x-y),0,0) \qquad && \Phi_{22}(x,y) = 9(xy(1-z)(1-x-y),0,0) \\
	& \Phi_{23}(x,y) = 9(0,xyz(1-x-y),0) \qquad && \Phi_{24}(x,y) = 9(0,xy(1-z)(1-x-y),0)
\end{alignat*}}%
associated to the two inner quadrature points. In the usual manner \cite{AKLY17,BoffiBrezziFortin13}, one can define appropriate interpolation operators $\Pi_T$, $\Pi_T^*$ for the spaces $\widehat V$, $\widehat V^*$ such that all conditions (A1)--(A8) required for our theory are valid.

\begin{figure}[ht!]\centering
\centering
\begin{tikzpicture}[scale=0.7]
\draw[dashed] (0,0) -- (3,0);
\draw[dashed] (0,0) -- (-0.5,-1.5);
\draw[dashed] (3,0) -- (-0.5,-1.5);
\draw[dashed] (0,0) -- (0,4);
\draw (-0.5,2.5) -- (0,4);
\draw (-0.5,2.5) -- (-0.5,-1.5);
\draw (3,4) -- (-0.5,2.5);
\draw (0,4) -- (3,4);
\draw (3,0) -- (3,4);

\draw[very thick,->] (0,0) -- (-0.70,0);
\draw[very thick,->] (0,0) -- (0,-0.70);
\draw[very thick,->] (0,0) -- (0.20,0.60);

\draw[very thick,->] (0,4) -- (-0.70,4);
\draw[very thick,->] (0,4) -- (0,4.8);
\draw[very thick,->] (0,4) -- (0.30,4.60);

\draw[very thick,->] (3,4) -- (3.7,4);
\draw[very thick,->] (3,4) -- (3,4.8);
\draw[very thick,->] (3,4) -- (3.7,4.40);

\draw[very thick,->] (3,0) -- (3.7,0);
\draw[very thick,->] (3,0) -- (3,-0.7);
\draw[very thick,->] (3,0) -- (3.7,0.40);

\draw[very thick,->] (-0.5,2.5) -- (-1.2,2.3);
\draw[very thick,->] (-0.5,2.5) -- (-0.5,3.2);
\draw[very thick,->] (-0.5,2.5) -- (-0.7,1.8);

\draw[very thick,->] (-0.5,-1.5) -- (-1.2,-1.7);
\draw[very thick,->] (-0.5,-1.5) -- (-0.5,-2.4);
\draw[very thick,->] (-0.5,-1.5) -- (-0.8,-2.2);

\end{tikzpicture}
\qquad\qquad
\centering
\begin{tikzpicture}[scale=0.7]
\draw[dashed] (0,0) -- (3,0);
\draw[dashed] (0,0) -- (-0.5,-1.5);
\draw[dashed] (3,0) -- (-0.5,-1.5);
\draw[dashed] (0,0) -- (0,4);
\draw (-0.5,2.5) -- (0,4);
\draw (-0.5,2.5) -- (-0.5,-1.5);
\draw (3,4) -- (-0.5,2.5);
\draw (0,4) -- (3,4);
\draw (3,0) -- (3,4);

\draw[very thick,->] (0,0) -- (-0.70,0);
\draw[very thick,->] (0,0) -- (0,-0.70);
\draw[very thick,->] (0,0) -- (0.20,0.60);

\draw[very thick,->] (0,4) -- (-0.70,4);
\draw[very thick,->] (0,4) -- (0,4.8);
\draw[very thick,->] (0,4) -- (0.30,4.60);

\draw[very thick,->] (3,4) -- (3.7,4);
\draw[very thick,->] (3,4) -- (3,4.8);
\draw[very thick,->] (3,4) -- (3.7,4.40);

\draw[very thick,->] (3,0) -- (3.7,0);
\draw[very thick,->] (3,0) -- (3,-0.7);
\draw[very thick,->] (3,0) -- (3.7,0.40);

\draw[very thick,->] (-0.5,2.5) -- (-1.2,2.3);
\draw[very thick,->] (-0.5,2.5) -- (-0.5,3.2);
\draw[very thick,->] (-0.5,2.5) -- (-0.7,1.8);

\draw[very thick,->] (-0.5,-1.5) -- (-1.2,-1.7);
\draw[very thick,->] (-0.5,-1.5) -- (-0.5,-2.4);
\draw[very thick,->] (-0.5,-1.5) -- (-0.8,-2.2);

\draw[very thick,->] (0.83, 0.8) -- (0.83,1.5);
\draw[very thick,->] (0.83, 0.8) -- (1.53,0.8);
\draw[very thick,->] (0.83, 0.8) -- (0.33,0.3);

\draw[very thick,->] (0.83, 2.1) -- (0.83,2.8);
\draw[very thick,->] (0.83, 2.1) -- (1.53,2.1);
\draw[very thick,->] (0.83, 2.1) -- (0.33,1.6);

\draw[fill,red] (-0.5,2.5) circle (0.12cm);
\draw[fill,red] (-0.5,-1.5) circle (0.12cm);
\draw[fill,red] (0,4) circle (0.12cm);
\draw[fill,red] (3,4) circle (0.12cm);
\draw[fill,red] (0,0) circle (0.12cm);
\draw[fill,red] (3,0) circle (0.12cm);
\draw[fill,red] (0.83, 0.8) circle (0.12cm);
\draw[fill,red] (0.83, 2.1) circle (0.12cm);
\end{tikzpicture}

\caption{Degrees of freedom of the spaces $\widehat\V^*$ (left) and $\widehat\V$ (right) by arrows. The quadrature points are depicted by the red dots.\label{fig:PRISM}}
\end{figure}
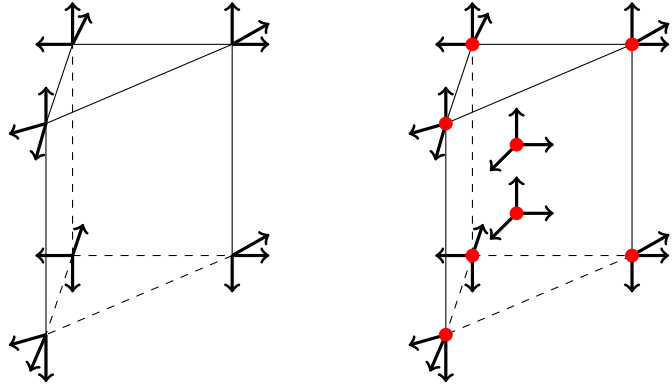

As a basis for $\widehat V$ that complies with the quadrature rule given above, we choose 
{\footnotesize
\begin{alignat*}{3}
	&\Phi_1 = \Psi_1 - \Phi_{21} \qquad && \Phi_2 = \Psi_2 - \Phi_{23} \qquad && \Phi_3 = \Psi_3 + \Phi_{21} - \Phi_{23} \\
	&\Phi_4 = \Psi_4 + \Phi_{21} \qquad && \Phi_5 = \Psi_5 + \Phi_{23} \qquad && \Phi_6 = \Psi_6 - \Phi_{21} + \Phi_{23} \\
	&\Phi_7 = \Psi_7 - \Phi_{22} \qquad && \Phi_8 = \Psi_8 - \Phi_{24} \qquad && \Phi_9 = \Psi_9 + \Phi_{22} - \Phi_{24} \\
	&\Phi_{10} = \Psi_{10} + \Phi_{22} \qquad && \Phi_{11} =  \Psi_{11} + \Phi_{24} \qquad && 
	\Phi_{12} = \Psi_{12} - \Phi_{22} + \Phi_{24} \\
	&\Phi_{13} = \Psi_{13} - \Phi_{19} \qquad && \Phi_{14} = \Psi_{14} - \Phi_{19} \qquad && \Phi_{15} = \Psi_{15} - \Phi_{19}\\
	&\Phi_{16} = \Psi_{16} + \Phi_{20} \qquad && \Phi_{17} = \Psi_{17} + \Phi_{20} \qquad && \Phi_{18} = \Psi_{18} + \Phi_{20}
\end{alignat*}}%
for the vertices and $\Phi_{19},\ldots,\Phi_{24}$ for the two inner quadrature points. 
Note that again exactly three basis functions are associated to each quadrature point while 
all other basis functions vanish at this point; see Figure~\ref{fig:PRISM}. As a consequence, 
we again obtain a block diagonal mass matrix for the velocity variable.

\begin{remark}
For the above choices of reference elements, all assumptions (A1)--(A8) required for our theory are valid. 
A quick inspection shows that the degrees of freedom of different elements are compatible, such 
that we can consider hybrid meshes without further considerations. By construction of the basis functions, the mass matrices for the velocity variable will be block diagonal. In summary, we thus obtain a second order multipoint flux approximation
in the spirit of Wheeler and Yotov~\cite{WheelerYotov06}.
\end{remark}

\section{Numerical illustration}\label{sec:num}

We now illustrate our theoretical results by a simple numerical example. 

\subsection{Test problem}

As computational domain for our tests, we choose $\Omega = \Omega_1 \cup \Omega_2$, where 
$\Omega_1 = (-1,1)\times (-1,0)$ and $\Omega_2=\{(x,y) : x^2+y^2< 1 \text{ and } y\ge 0\}$. 
The exact pressure and conductivity matrix are defined as in \cite{WheelerYotov06} by 
\begin{align*}
p(x,y) = \sin(\pi x) \sin(\pi y) \qquad\text{and}
\qquad \K(x,y) = \begin{pmatrix} 4+(x+2)^2+y^2 & 1+\sin(xy)\\ 1+\sin(xy) & 2\end{pmatrix},
\end{align*}
and we set $u = -\K\nabla p$. These functions fulfill all regularity assumptions made throughout this manuscript, so we expect to observe the full convergence rates. 
We then partition $\Omega_1$ into triangles and $\Omega_2$ into rectangles in a conforming way, 
i.e. such that no hanging nodes are present at the interface. 

\subsection{First order elements}

For approximation of the solution, we first consider the spaces $\BDM_1$--$\P_0$ used in \cite{WheelerYotov06}; compare with Figure~\ref{fig:BDM} in the introduction and the correpsonding discussion. 
For this choice, the assumptions (A1)--(A5) are valid and the theoretical results of Section~\ref{sec:first} apply. In Table~\ref{tab:1}, we display the corresponding results obtained in our computational tests
on a sequence of non-nested meshes.
\begin{table}[ht!]
\begin{tabular}{c|c|c||c|c||c|c} 
$h$ & DOF $u$ & DOF $p$ & $\|u-u_h\|$ & eoc & $\|\pi_h^0p-p_h\|$ & eoc \\
\hline
\hline
\rule{0pt}{2.1ex}
$2^{-1}$ & $108$   & $28$   & $0.396017$ & ---    & $0.238983$ & ---    \\
$2^{-2}$ & $460$   & $132$  & $0.184622$ & $1.10$ & $0.057068$ & $2.06$ \\
$2^{-3}$ & $1574$  & $462$  & $0.092531$ & $0.99$ & $0.015148$ & $1.91$ \\
$2^{-4}$ & $6094$  & $1822$ & $0.046019$ & $1.00$ & $0.003804$ & $1.99$ \\
$2^{-5}$ & $25050$ & $7590$ & $0.022949$ & $1.00$ & $0.000945$ & $2.00$
\end{tabular}
\medskip
\caption{Degrees of freedom, relative discretization errors, and convergence rates for the first order multipoint flux finite element method. \label{tab:1}} 
\end{table}

As predicted by our theory, we observe first order convergence for the velocity and second order convergence for the projected pressure. 

\subsection{Second order approximation} 

For the second test, we consider the approximation in $\RT_1$--$\P_1$ for triangles and $\BDFM_2$--$\P_1$ for quadrilaterals. Following the results of Section~\ref{sec:improved}, we here expect second order convergence for the velocity and pressure, 
and third order for the projected pressure. The results of our computations are depicted in Table~\ref{tab:2}. 
\begin{table}[ht!]
\begin{tabular}{c|c|c||c|c||c|c||c|c} 
$h$ & DOF $u$ & DOF $p$ & $\|u-u_h\|$ & eoc & $\|p-p_h\|$ & eoc & $\|\pi_h^0(p-p_h)\|$ & eoc \\
\hline
\hline
\rule{0pt}{2.1ex}
$2^{-1}$ & $164$   & $84$    & $0.078309$ & ---    & $0.064026$ & ---    & $0.033106$ & ---    \\
$2^{-2}$ & $724$   & $396$   & $0.013097$ & $2.57$ & $0.007741$ & $3.04$ & $0.002864$ & $3.53$ \\
$2^{-3}$ & $2498$  & $1386$  & $0.002275$ & $2.52$ & $0.001936$ & $1.99$ & $0.000391$ & $2.87$ \\
$2^{-4}$ & $9738$  & $5466$  & $0.000484$ & $2.23$ & $0.000469$ & $2.04$ & $0.000049$ & $2.99$ \\
$2^{-5}$ & $40230$ & $22770$ & $0.000099$ & $2.28$ & $0.000111$ & $2.06$ & $0.000005$ & $3.13$
\end{tabular}
\medskip
\caption{Degrees of freedom, relative discretization errors, and convergence rates for the second order multipoint flux finite element method.\label{tab:2}} 
\end{table}
Again, the predicted convergence rates are also observed in our numerical tests. 
A graphical presentation of the solution is given in Figure~\ref{fig:3}.
\begin{figure}[ht!]
\begin{center}
\includegraphics[width=.3\textwidth]{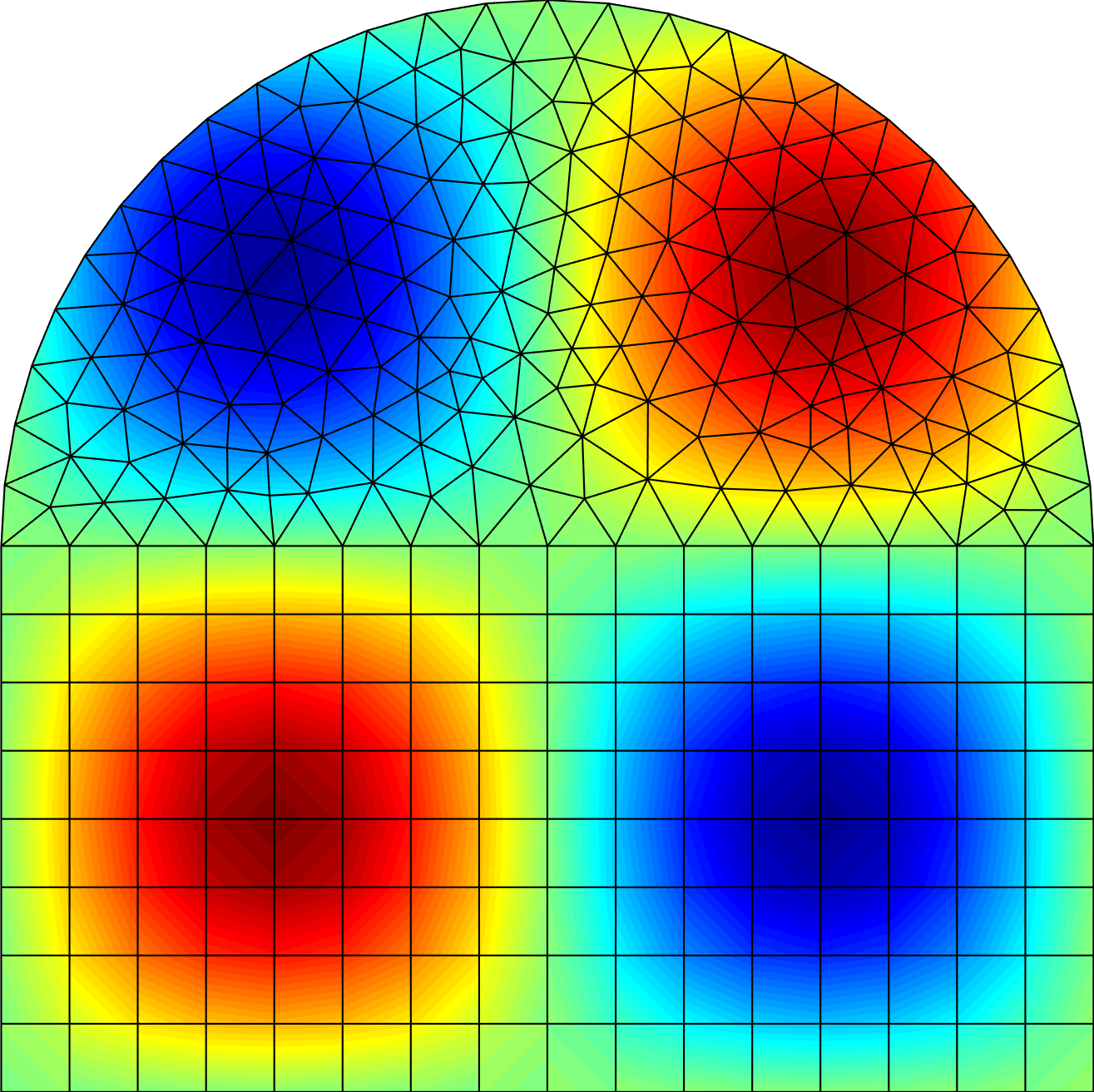}
\hskip3ex
\includegraphics[width=.3\textwidth]{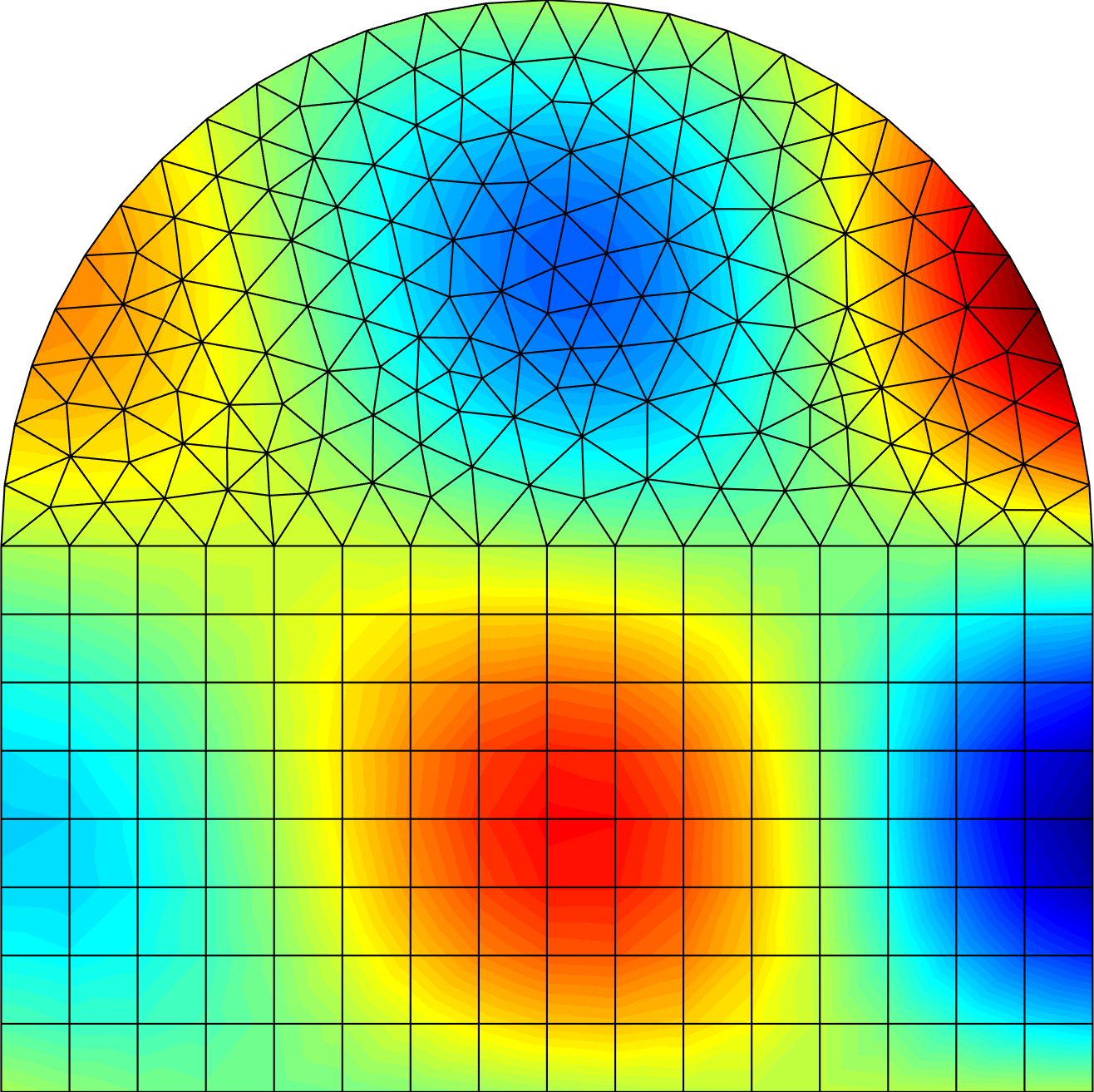}
\hskip3ex
\includegraphics[width=.3\textwidth]{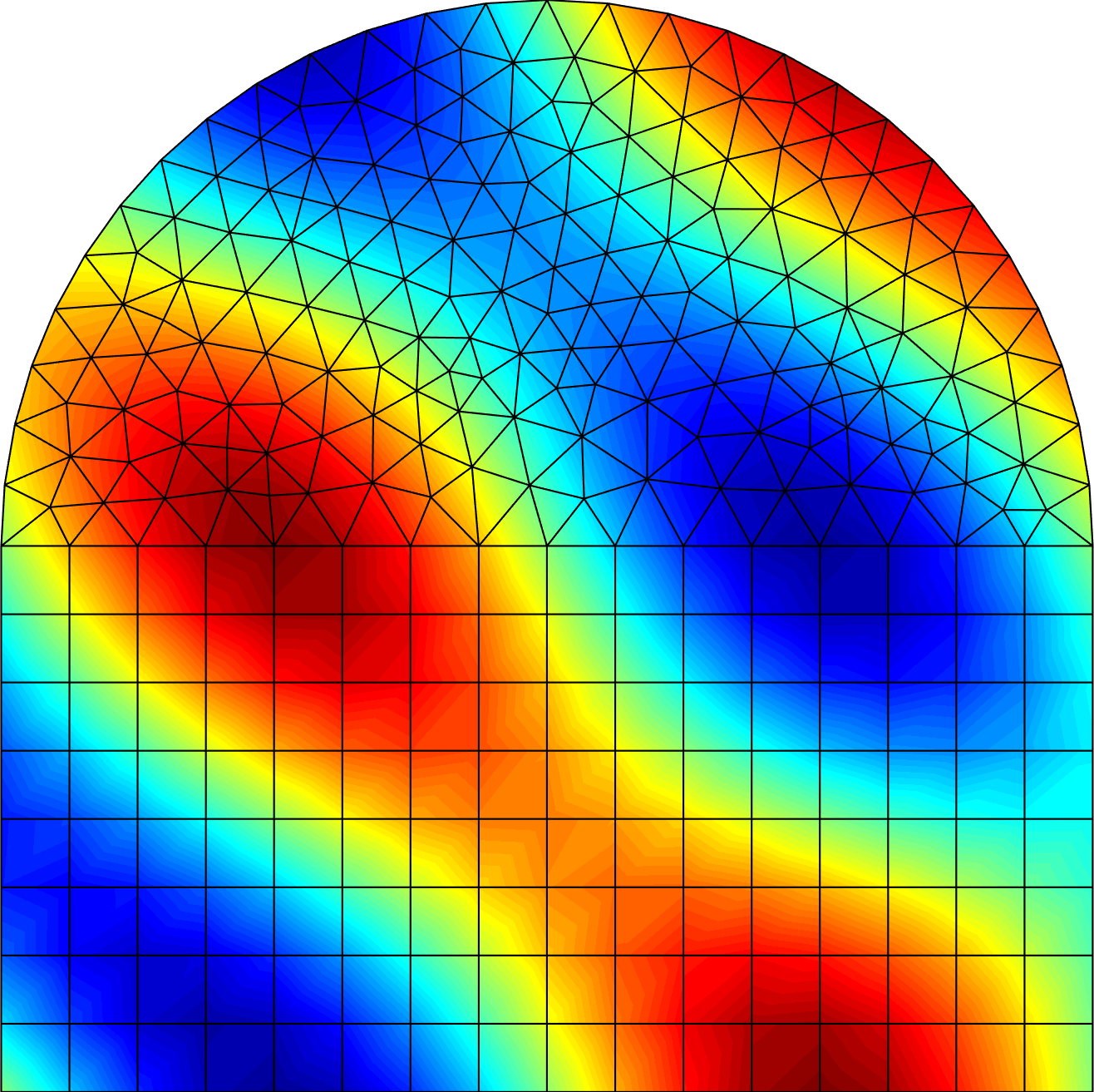}
\caption{Snapshots of the pressure $p_h$ (left) and the two velocity components $u_{x,h}$, $u_{y,h}$ (middle, right) for the second order approximation.}
\end{center}
\label{fig:3}
\end{figure}

\section{Discussion} \label{sec:discussion}

In this paper, we extended the multipoint flux mixed finite element method of Wheeler and Yotov \cite{WheelerYotov06} to second order elements and hybrid meshes in two and three space dimensions. 
The conditions for our abstract convergence theory were verified by construction of suitable finite elements and quadrature rules for various element types. 
Appropriate basis functions were defined that lead to block-diagonal mass matrices for the $
H(\div)$ variable. 

\bigskip 

Before closing, let us mention some directions for further extension and application of our results: 
The mass-lumping strategy proposed in this paper can also be used for the efficient 
numerical approximation of acoustic wave propagation. We refer to \cite{Cohen02,EggerRadu18} for results in this direction. 
In a similar manner as presented here, also the construction of mass-lumping strategies for $H(\curl)$ finite elements is possible; see~ \cite{EggerRadu18a}. 
Moreover, the main arguments in our analysis can in principle also be generalized to higher order. The construction of appropriate quadrature formulas and corresponding basis functions, of course, becomes more tedious when further increasing the polynomial degree.

\section*{Acknowledgements}

The authors are grateful for financial support by the ``Excellence Initiative'' of the German Federal and State Governments via the Graduate School of Computational Engineering GSC~233 at Technische Universität Darmstadt and by the German Research Foundation (DFG) via grants TRR~146, TRR~154, and Eg-331/1-1.


\end{document}